\newtheorem{lemma}{Lemma}[section]
\newtheorem{proposition}[lemma]{Proposition}
\newtheorem{theorem}[lemma]{Theorem}
\newtheorem{definition}[lemma]{Definition}
\theoremstyle{definition}
\newtheorem{remark0}[lemma]{Remark}
\newtheorem*{solution*}{Solution}
\newenvironment{remark}{\begin{remark0}\rm}{\hspace*{\fill} $\square$
                        \end{remark0}}
\DeclareMathOperator{\End}{End}
\DeclareMathOperator{\gr}{gr}
 \DeclareMathOperator{\ad}{ad}
  \DeclareMathOperator{\Ad}{Ad}
 \newcommand{\bbar}[1]{\setbox0=\hbox{$#1$}\dimen0=.2\ht0 \kern\dimen0 \overline{\kern-\dimen0 #1}}
 \let\hom\relax 
 \DeclareMathOperator{\hom}{Hom}
 \DeclareMathOperator{\Hom}{\mathcal{H}\kern-.125em\mathpzc{om}}
 \DeclareMathOperator{\id}{id}
 \DeclareMathOperator{\im}{im}
 \DeclareMathOperator{\Proj}{\mathcal{P}\kern-.125em\mathpzc{roj}}
  \DeclareMathOperator{\tr}{tr}
 \newcommand{\udot}{\ensuremath{{\lower .183333em \hbox{\LARGE \kern -.05em$\cdot$}}}}
  \DeclareMathOperator{\divv}	{div} 
   \DeclareMathOperator{\grad}	{grad}
 \newcommand{\cD}{\mathcal{D}}
 \newcommand{\cL}{\mathcal{L}}
 \newcommand{\cN}{\mathcal{N}}
 \newcommand{\cO}{\mathcal{O}}
\newcommand{\cP}{\mathcal{P}}
\newcommand{\cS}{\mathcal{S}}
 \newcommand{\N}{\mathbb{N}}
 \newcommand{\R}{\mathbb{R}}
 \newcommand{\T}{\mathbb{T}}
 \newcommand{\fg}{\mathfrak{g}}
 \newcommand{\fh}{\mathfrak{h}}
 \newcommand{\fn}{\mathfrak{n}}
 \newcommand{\p}{\partial}
 \DeclareMathOperator{\spann}{span}
\newcommand{\of}{\mathcal{O}}
\newcommand{\dd}{\,{\mathrm d}}
\newcommand{\db}{{\mathrm d}}
\newcommand{\st}{\,\p}
\let\@fnsymbol\@arabic
\numberwithin{equation}{section}
\title{Cartan connections for stochastic developments on sub-Riemannian
  manifolds}
\author{Ivan Beschastnyi\footnote{{\sc Laboratoire Jacques-Louis Lions,
    Sorbonne Universit{\'e}, Universit{\'e} de Paris, CNRS, Inria,
    75005 Paris, France.} {\it E-mail address:}
  \texttt{i.beschastnyi@gmail.com}}\and
Karen Habermann\footnote{{\sc Department of Statistics, University of
    Warwick, Coventry, CV4 7AL, United Kingdom.} {\it E-mail address:}
  \texttt{karen.habermann@warwick.ac.uk}}\and
Alexandr Medvedev\footnote{{\sc International School for Advanced Studies,
  via Bonomea 265, Trieste 34136, Italy.} {\it E-mail address:}
  \texttt{sasha.medvedev@gmail.com}}}
\begin{document}

\maketitle

\begin{abstract}
Analogous to the characterisation of Brownian motion on a
Riemannian manifold as the development of Brownian motion on a
Euclidean space, we construct sub-Riemannian diffusions
on equinilpotentisable sub-Riemannian manifolds
by developing a canonical stochastic process arising as the lift
of Brownian motion to an associated model space. The notion of
stochastic development we introduce for equinilpotentisable
sub-Riemannian manifolds
uses Cartan connections, which take the place of the Levi--Civita
connection in Riemannian geometry.
We first derive a general expression
for the generator of the stochastic process which is the
stochastic development with respect to a Cartan connection of the
lift of Brownian motion to the model space.
We further provide a necessary and sufficient condition for the
existence of a Cartan connection which develops the canonical
stochastic process to the sub-Riemannian diffusion associated with
the sub-Laplacian defined with respect to the Popp volume. We
illustrate the construction of a suitable Cartan connection for free
sub-Riemannian structures with two generators and we discuss an
example where the condition is not satisfied.
\end{abstract}

\section{Introduction}
\label{sec:intro}
Brownian motion, also called Wiener process, is a mathematical description
of the
animated and irregular motion of particles which
are suspended, say, in a fluid. This process plays an important role
in various areas of mathematics and is used, among others, to describe more
complicated stochastic processes, to model unknown forces in control
theory, to give a rigorous path integral formulation of quantum
mechanics, and it prominently features in mathematical finance.

Brownian motion $(b_t)_{t\geq 0}$ on a smooth Riemannian manifold $M$
with the Laplace--Beltrami operator $\Delta_M$
is the unique continuous-time stochastic process on $M$ whose
infinitesimal motion is described by $\frac{1}{2}\Delta_M$, that is,
for the heat semigroup $(P_t)_{t\geq 0}$ associated with
$(b_t)_{t\geq  0}$ and for any function $f\in C^\infty(M)$, we have
\begin{equation*}
  \frac{1}{2}\Delta_M f(q)=\lim_{t\downarrow 0}\frac{P_tf(q)-f(q)}{t}\;.
\end{equation*}
We call $\frac{1}{2}\Delta_M$ the infinitesimal generator of Brownian
motion on $M$.
One of the many interesting features of Brownian motion is that it can
be used to give a solution to the Dirichlet problem associated with
$\Delta_M$ on a domain in $M$. In fact, it is even possible to
uniquely characterise Brownian motion via the heat equation.
Brownian motion also arises as the limit of a sequence of random walks
on the manifold $M$.

Yet another alternative construction of Brownian motion uses the
notion of anti-development of a curve in a Riemannian manifold.
To a differentiable curve in the Riemannian manifold $M$ of dimension
$n$, we can associate a curve in the model space $\R^n$
via the Levi--Civita connection. By extending this correspondence to
stochastic processes whose
sample paths are almost surely continuous but nowhere
differentiable it can be shown that a stochastic process on the manifold
$M$ of dimension $n$ is a Brownian motion on $M$ if and only if its
anti-development is a standard Brownian motion on $\R^n$. In
particular, if we take a standard Brownian motion on $\R^n$ we obtain
a process in the orthonormal frame bundle $\of(M)$ which projects
nicely to $M$ to give a Brownian motion on the Riemannian manifold $M$.

For further details on and properties of Brownian motions on smooth
Riemannian manifolds, see, for instance,
\'{E}mery~\cite{emery}, Grigor'yan~\cite{grig}, Hsu~\cite{hsu},
and J{\o}rgensen~\cite{jorgensen}, as well as~\cite{part3essay} for a
more exhaustive overview of the various characterisations of Brownian
motion.

With Brownian motions on smooth Riemannian manifolds being well
understood, we turn our attention to the sub-Riemannian setting. A
sub-Riemannian manifold is a triple $(M,\cD,g)$ consisting of a smooth
manifold $M$ together with a bracket generating distribution
$\cD\subset TM$ and a metric $g$ on $\cD$. As the natural
generalisation of the Laplace--Beltrami operator in Riemannian
geometry, the sub-Riemannian Laplacians, also called sub-Laplacians,
on a sub-Riemannian manifold
are defined as the divergence of the horizontal gradient. The
divergence $\divv_\nu$ depends on a choice of a positive smooth
measure $\nu$ on the manifold $M$, and the horizontal gradient
$\grad_H f$ of a function $f\in C^\infty(M)$ is a smooth section of
$\cD$ such that, for any section $X\in \Gamma(\cD)$,
\begin{equation*}
  g(\grad_H f,X)=\db f(X)\;.
\end{equation*}
The sub-Laplacian $\Delta_\nu$ with respect to the measure
$\nu$ acting on smooth functions $f$ on $M$ is thus given by
\begin{equation*}
  \Delta_\nu f = \divv_\nu(\grad_H f)\;.
\end{equation*}
For a local frame $(X_1,\dots,X_{k_1})$ of $\cD$ near points of maximal
  rank which is orthonormal with
respect to $g$, we can locally write 
\begin{equation*}
  \Delta_\nu = \sum_{i=1}^{k_1} X_i^2 + \sum_{i=1}^{k_1} \divv_\nu(X_i)X_i\;.
\end{equation*}

Similar to Brownian motion on a Riemannian manifold, we can consider
the continuous-time stochastic process on $M$ whose infinitesimal
generator is $\frac{1}{2}\Delta_\nu$. These processes, which we call
sub-Riemannian diffusions, are significantly less well
understood than Brownian motion, and many of its properties
remain to be understood.

Following M\'{e}tivier~\cite{metivier} and Ben
Arous~\cite{GBA1,GBA2}, the small-time asymptotics of
sub-Riemannian diffusion processes have been studied, amongst others, by
Bailleul, Mesnager, Norris~\cite{BMN}, by Barilari, Boscain,
Neel~\cite{BBN}, by de Verdière, Hillairet, Trélat~\cite{trelat} as well
as in~\cite{PhDthesis} and \cite{MAP}. Already at the level of
small-time asymptotics, it is seen that sub-Riemannian diffusions show
qualitatively different behaviours compared to Brownian motions.
We further remark that since sub-Laplacians are in
divergence form, the first-order small-time heat kernel asymptotics of
the associated
stochastic processes only depend on the underlying sub-Riemannian
structure, and not on the measure $\nu$.

For certain sub-Laplacians, the question of approximating
the associated sub-Riemannian diffusions by random
walks has been addressed by Boscain, Neel, Rizzi~\cite{BNR} and by
Gordina, Laetsch~\cite{gordina}.

In comparison to the various characterisations of Brownian motions, the
one characterisation which appears to be missing for sub-Riemannian
diffusions is as the development of a suitable model stochastic
process. The first major obstacle to such a construction is that the
notion of Levi--Civita connection does not carry over to the
sub-Riemannian setting. Instead, we employ the notion of Cartan
connections which is well-adapted to the graded structures 
appearing in the study of sub-Riemannian manifolds. The Cartan
geometry approach to sub-Riemannian geometry works particularly well
for equinilpotentisable sub-Riemannian manifolds. Moreover, for
equiregular, and therefore also for equinilpotentisable,
sub-Riemannian manifolds, there exists a smooth volume canonically
associated with the sub-Riemannian structure, which is the so-called
Popp volume. The objective of this article is to initiate the
characterisation of sub-Riemannian diffusions via stochastic
development by providing such a construction on a wide range of
equinilpotentisable
sub-Riemannian manifolds for the
sub-Riemannian diffusion associated with the sub-Laplacian
defined with respect to the Popp volume.

We stress that the work presented lies more on the differential geometry side
and is mainly concerned with constructing suitable Cartan
connections.
The central ingredient needed from stochastic analysis is
the correspondence between Stratonovich stochastic differential equations
and infinitesimal generators, which is provided at the suitable point.

Cartan geometry makes the idea of a model tangent space rigorous. For
example, the tangent space of a Riemannian manifold is a Euclidean
space and we can view a Cartan connection as a way of rolling this
Euclidean space on the Riemannian manifold,
cf. Wise~\cite{roll_cartan}. It gives rise to a natural notion of
the development of a curve $\gamma\colon [0,t]\to \R^n$ to $M$ by
simply saying that the contact point while rolling traces out
$\gamma$, and the notion of stochastic development is defined in a
similar way.

In the sub-Riemannian setting, the model tangent spaces are Lie
algebras of nilpotent Lie groups known as the Carnot groups. The
distribution $\cD$ of a
sub-Riemannian manifold $(M,\cD,g)$ generates a filtration which is
defined iteratively at $q\in M$, for $i\in\N$, by $\cD^{-1}_q = \cD_q$
and
\begin{equation*}
  \cD^{-(i+1)}_q = \cD^{-i}_q + \left[\cD,\cD^{-i}\right]_q\;.
\end{equation*}
The minimal $m\in\N$ such that $\cD^{-m}_q = T_q M$ for every $q\in M$
is the step of the sub-Riemannian manifold, and the tuple of numbers
$(k_1,k_2,\dots,k_m) = (\dim \cD_q^{-1},\dim
\cD_q^{-2},\dots,\dim\cD_q^{-m})$ is called the growth vector at $q\in
M$. Using the filtration, we further define the associated grading of
the tangent space $T_q M$ at $q\in M$ by
\begin{equation}\label{grref}
  \gr\left(T_qM\right) = 
  \cD^{-1}_q \oplus \cD^{-2}_q/\cD^{-1}_q \oplus \dots
  \oplus \cD^{-m}_q/\cD^{-(m-1)}_q\;.
\end{equation}
Each $\gr(T_qM)$ has the natural structure of a nilpotent Lie
algebra as well as a natural horizontal metric defined on
$\cD^{-1}_q$. If $X \in \Gamma(\cD^{-i})$, $Y \in \Gamma(\cD^{-j})$,
for $i,j\in\{1,\dots,m\}$, are sections of the corresponding bundles,
then the Lie algebra structure is defined by
\begin{equation*}
  \left[X+\Gamma\left(\cD^{-i+1}\right),
    Y+\Gamma\left(\cD^{-j+1}\right)\right]
  = [X,Y]+\Gamma\left(\cD^{-i-j+1}\right)\;.
\end{equation*}

We notice that the metric $g$ on $\cD$ induces a metric on all of
$\gr(T_qM)$ for $q\in M$ as follows. On the tensor product
$\otimes_{i=1}^l \cD^{-1}_q$ for $l\in\{2,\dots,m\}$, we define a map
\begin{equation*}
  \pi_l\colon
  \bigotimes_{i=1}^l \cD^{-1}_q \to \cD^{-l}_q/\cD^{-l+1}_q
\end{equation*}
given, for vector fields $X_1,\dots,X_l$ on $M$ extending
vectors $v_1,\dots,v_l \in T_q M$, by
\begin{equation*}
  \pi_l(v_1 \otimes \dots \otimes v_l) =
  [X_1,[X_2,\dots,[X_{l-1},X_l]\dots]](q){} \mod \cD_q^{-l+1}\;.
\end{equation*}
Using the metric induced by $g$ on $\otimes_{i=1}^l \cD^{-1}$, we
identify $\cD^{-l}_q/\cD^{-l+1}_q$ with $(\ker \pi_l)^\perp$ for the
restricted metric. This further
gives rise to a metric on the whole of $\gr(T_q M)$, which plays an
important role in Section~\ref{sec:cartan}. The
space $\Lambda^n(T_q M)$ is then naturally isomorphic to
$\Lambda^n\gr(T_q M)$, and the constructed metric allows us to choose
a canonical element in $\Lambda^n\gr(T_q M)$. The image of this
element under the canonical isomorphism is the Popp volume at
$q\in M$. For further details, see
Montgomery~\cite[Chapter~10]{montgomery}.

A sub-Riemannian manifold is called equinilpotentisable if
$\gr(T_q M)$ does not depend on the point $q\in M$ as a metric Lie
algebra, that is, all the
associated gradings are metrically isomorphic. In this case, we write
$\fn_{-k} = \cD^{-k}_q/\cD^{-k+1}_q$ for $k\in\{1,\dots,m\}$ as well as
$\fn = \gr(T_q M)$ to
emphasise this independence. The Carnot group which serves as
model tangent space to an equinilpotentisable sub-Riemannian manifold
is the simply connected Lie group whose Lie algebra is $\fn$.
In the Cartan terminology, this Carnot group is called nilpotent
model. It represents the flat space and the curvature invariants measure
how much a given sub-Riemannian manifold differs from its nilpotent
model.

It is important to note that Carnot groups can possess several
non-equivalent left-invariant sub-Riemannian metrics. We say that
left-invariant sub-Riemannian metrics $g_1$ and $g_2$ on a Carnot
group $(G, \cD)$ are equivalent if there exists a graded Lie group
automorphism that maps $g_1$ to $g_1$. 
One of the simplest examples of a Carnot group with non-equivalent
sub-Riemannian metrics is the 5-dimensional Heisenberg group $H_5$.
Its Lie algebra $\fh_5$ has a natural filtration $\R^4 \oplus z \R =
\cD^{-1} \oplus \cD^{-2}$, where $z$ is an element of the centre of
$\fh_5$ and where, for $v_1,v_2\in \cD$, the Lie brackets are given,
in terms of a symplectic form $\omega$ on $\cD$, by
\begin{equation*}
  [v_1, v_2] = \omega(v_1, v_2) z\;.
\end{equation*}
All automorphisms that preserve $\cD=\mathbb R^4$ have to preserve
$\omega$ up to multiplication by a scalar. Therefore, the group of
$\cD$-preserving automorphisms is equivalent to the
conformal-symplectic group $CSp(\mathbb R^4)$. As all symplectic forms on
$\mathbb R^4$ are equivalent, classification of metrics
on $\mathbb R^4$ with respect to $CSp(\mathbb R^4)$ is equivalent to
the classification of pairs $(g, [\omega])$ where $g$ is a metric on
$\R^4$ and $[\omega]$ is a conformal class of symplectic forms. By
normalising $g$ to an arbitrary fixed metric, our classification
problem reduces to the classification of $[\omega]$ with respect to
the orthogonal group $O(\mathbb R^4)$. Furthermore, instead of
$\omega$ we consider the skew-symmetric operator $\omega \circ
g^{-1}$. Using the action of $O(\mathbb R^4)$, we can normalise every
skew-symmetric operator to the form
\begin{equation*}
\begin{pmatrix} 
0 & \lambda_1 & 0 & 0\\
-\lambda_1 & 0 & 0 & 0 \\
0 & 0 & 0 & \lambda_2 \\
0 & 0 & -\lambda_2 & 0 \\
\end{pmatrix},
\end{equation*}
where $\lambda_1 \ge \lambda_2 > 0$.
We see that conformal classes $[\omega]$ are in one-to-one
correspondence with the ratio $\lambda_1:\lambda_2$. This means that
the family of non-equivalent metrics on $H_5$ is one-dimensional.

A Cartan connection allows us to develop curves from the
corresponding Carnot 
group to an equinilpotentisable
sub-Riemannian manifold, and in the same spirit to define a notion of
stochastic development, exactly like it was done in the
Riemannian case.

There is a canonical sub-Riemannian diffusion on a Carnot group,
arising as the lift of a Brownian motion on $\R^{k_1}$ and
taking the place of the standard Brownian motion on $\R^n$, which we
develop to give a sub-Riemannian diffusion on the corresponding
sub-Riemannian manifold. The generator of the resulting stochastic
process always has the same second order term, while the first order
term depends on the choice of the Cartan connection. As said
previously, we are particularly
interested in constructing Cartan connections which give rise to
sub-Riemannian diffusions associated to sub-Laplacians
defined with respect to the Popp volume.

Barilari and Rizzi~\cite{dave_luca} give a local formula for the Popp
volume $\cP$ and for the sub-Laplacian $\Delta_\cP$ with respect to the
Popp volume in terms of an adapted frame, that is, a frame
$(X_1,\dots,X_n)$ such that $X_1,\dots,X_{k_i}$ span $\cD^{-i}$ for
$i\in\{1,\dots,m\}$. On an equinilpotentisable sub-Riemannian
manifold, the local formula for $\Delta_\cP$ only involves the structure
constants $c_{ij}^k\in C^\infty(M)$ of the adapted frame, which
satisfy, for $i,j\in\{1,\dots,n\}$,
\begin{equation*}
  [X_i,X_j]=\sum_{k=1}^nc_{ij}^k X_k\;,
\end{equation*}
and it is given by
\begin{equation}
  \label{eq:sr_laplacian}
  \Delta_\cP =
  \sum_{i=1}^{k_1} \left( X_i^2 - \sum_{l=1}^n c^l_{il} X_i \right)\;.
\end{equation}

Restricting our attention to equinilpotentisable sub-Riemannian
manifolds limits the classes of structures that we can consider. For
instance, even contact structures in higher dimensions are not
equinilpotentisable in general, and our setting excludes singular
structures like the Martinet manifold. Moreover, even for a
equinilpotentisable sub-Riemannian manifold, there does not always
exist a Cartan connection which allows us to characterise the
sub-Riemannian diffusion associated with $\Delta_\cP$
in terms of a stochastic development. However, in
Theorem~\ref{thm:develop}, we provide a necessary and sufficient
condition, which is proven in Section~\ref{sec:cartan}, for when we
can obtain such Cartan connections.
All the Cartan connections we construct are
characterised by a torsion-free-like condition, which requires
a certain part of the curvature two-form to vanish.

We remark that Baudoin, Feng, Gordina~\cite{bott}
consider the stochastic parallel transport with respect to the Bott
connection on foliated manifolds, and that Angst, Bailleul,
Tardif~\cite{kinetic} use the Cartan geometry approach to define
kinetic Brownian motion on Riemannian manifolds.

A Cartan connection in the sub-Riemannian setting is a
$\fg$-valued one-form $\omega$, where $\fg$ is a semi-direct product
of the nilpotent Lie algebra $\fn$ and the Lie algebra $\fh$ of the
Lie group $H$ of infinitesimal symmetries of $\fn$ which is isomorphic to
a subgroup of
$SO(k_1)$. The connection form can be separated into its
$\fn$-valued part $\omega_\fn$ and its $\fh$-valued part $\omega_\fh$.
We choose a basis $\{A_\alpha\colon 1\leq\alpha\leq \dim H\}$ of $\fh$
and consider the corresponding components $\omega^\alpha$ of
$\omega_\fh$. If we further choose a local coframe
$(\theta^1,\dots,\theta^n)$ of $T^*M$ dual to the adapted frame
$(X_1,\dots,X_n)$, we can locally write
\begin{equation*}
  \omega^\alpha = \sum_{i=1}^n\Gamma^\alpha_i \theta^i \;,
\end{equation*}
for some smooth functions $\Gamma^\alpha_i$ which are the Christoffel
symbols of the connection $\omega_\fh$. Often we aggregate the
first $k_1$ components into a single vector-valued function
\begin{equation*}
  \Gamma^\alpha = \begin{pmatrix}
    \Gamma^\alpha_1\\
    \vdots\\
    \Gamma^\alpha_{k_1}
  \end{pmatrix}\;,
\end{equation*}
and similarly, we write
\begin{equation*}
  X = \begin{pmatrix}
    X_1\\
    \vdots\\
    X_{k_1}
  \end{pmatrix}  
\end{equation*}
to shorten the notations and to simplify the formulae.

The main formula underlying our constructions of Cartan
connections is provided in the following theorem.
\begin{theorem}\label{thm:main_formula}
For an equinilpotentisable sub-Riemannian manifold $(M,\cD,g)$
with a symmetry group $H$
and an adapted Cartan connection $\omega$,
the generator $\frac{1}{2}\Delta$ of the
stochastic process
on $M$ obtained as the stochastic development of the
canonical
sub-Riemannian diffusion on the nilpotent model $N$ is given,
in a local trivialisation and in
terms of the Christoffel symbols $\Gamma^\alpha$, by
\begin{equation}
  \label{eq:main_formula}
  \Delta = \sum_{i=1}^{k_1} X_i^2 + \sum_{\alpha=1}^{\dim H}
  (\Gamma^\alpha)^T A_\alpha X\;.
\end{equation}
\end{theorem}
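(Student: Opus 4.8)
The plan is to realise the stochastic development as the solution of a Stratonovich stochastic differential equation on the Cartan bundle $P\to M$, viewed as a principal $H$-bundle, and then to read off the generator of the projected process through the correspondence between Stratonovich equations and second-order operators. The canonical sub-Riemannian diffusion on the nilpotent model $N$ is driven by a standard Brownian motion $(B_t)_{t\geq 0}$ on $\R^{k_1}$ along the horizontal generators $e_1,\dots,e_{k_1}$ of $\fn$. Developing it with respect to $\omega$ amounts to solving on $P$ the equation $\mathrm{d}U_t = \sum_{i=1}^{k_1} H_i(U_t)\circ\mathrm{d}B_t^i$, where $H_i$ is the $\omega$-horizontal lift of the $i$-th horizontal direction, characterised by $\omega_\fn(H_i)=e_i$ and $\omega_\fh(H_i)=0$. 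The projection of $(U_t)_{t\geq 0}$ to $M$ is the stochastic development whose generator we must identify as $\tfrac12\Delta$.

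First I would record that, since $(B_t)_{t\geq 0}$ is driftless, the Stratonovich-to-generator correspondence gives the generator of $(U_t)_{t\geq 0}$ as the horizontal Laplacian $\tfrac12\sum_{i=1}^{k_1}H_i^2$. Because the fields $H_i$ transform in the standard representation under the right $H$-action and $H\subset SO(k_1)$ acts by orthogonal matrices, this operator is $H$-invariant; applied to a function $f\circ\pi$ pulled back along $\pi\colon P\to M$ it therefore produces a fibre-constant function, which is exactly what makes the projected process Markov with a well-defined generator on $M$. This invariance lets me compute in a local trivialisation $P\cong M\times H$ and evaluate at the identity frame $h=e$ to extract $\Delta f$.

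In this trivialisation the $i$-th frame vector is $\sum_j h_{ji}X_j$, read off from $\omega_\fn$, so the base part of $H_i$ is $\sum_j h_{ji}X_j$, while $\omega_\fh(H_i)=0$ forces the vertical part to be $-\sum_\alpha \Gamma^\alpha_i\,\widehat{A}_\alpha$ at $h=e$, with $\widehat{A}_\alpha$ the fundamental vertical field of $A_\alpha$. Since $f\circ\pi$ is fibre-constant, $H_i(f\circ\pi)=\sum_j h_{ji}(X_jf)$, and applying $H_i$ once more splits into a base-base term and a fibre-base term. Summing the base-base term over $i$ and using $\sum_{i=1}^{k_1}h_{ki}h_{ji}=\delta_{kj}$ yields the second-order part $\sum_{i=1}^{k_1}X_i^2 f$. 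The fibre-base term comes from differentiating $h\mapsto\sum_j h_{ji}(X_jf)$ along the vertical part of $H_i$; at $h=e$ the derivative in the direction $A_\alpha$ contributes $(A_\alpha)_{ji}$, so after summing over $i$ the term collapses to $-\sum_\alpha (A_\alpha\Gamma^\alpha)^{T}X f = \sum_\alpha (\Gamma^\alpha)^{T}A_\alpha X f$, the skew-symmetry $A_\alpha^{T}=-A_\alpha$ fixing the sign. Combining the two contributions gives $\sum_{i=1}^{k_1}H_i^2(f\circ\pi)\big|_{h=e}=\Delta f$ with $\Delta$ as in~\eqref{eq:main_formula}, hence the generator $\tfrac12\Delta$.

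I expect the main obstacle to be the careful set-up of the horizontal lifts through the Cartan connection: unlike the Levi--Civita case, the connection splits as $\omega=\omega_\fn+\omega_\fh$, and one must verify that $\omega_\fn$ plays the role of the soldering form identifying the driving directions with $e_1,\dots,e_{k_1}\in\fn_{-1}$ while only $\omega_\fh$ contributes the Christoffel symbols $\Gamma^\alpha$. The remaining delicate points are the justification that $\tfrac12\sum_{i=1}^{k_1}H_i^2$ genuinely descends to a generator on $M$, which rests on its $H$-equivariance together with the hypoellipticity ensured by bracket generation, so that the higher strata are reached through the Lévy area of $(B_t)_{t\geq 0}$ without entering the formula explicitly, and the bookkeeping of signs and index conventions in the fibre-base term. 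By contrast, the Stratonovich-to-generator step and the orthogonality reduction of the second-order term are routine.
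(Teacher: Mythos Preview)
Your proposal is correct and follows essentially the same route as the paper. The paper first derives the deterministic development equations (its Proposition~\ref{prop:later_ref}) by pulling back the Cartan connection along a horizontal curve, packages them into vector fields $Z = \tilde{h}X + \sum_\alpha (\tilde{h}\Gamma^\alpha)Y_\alpha$ on the bundle, then replaces $u\,\mathrm{d}t$ by $\partial b$ and computes $Z^TZ(\pi^*f)$; your $H_i$ are exactly these $Z_i$, and where the paper computes at a general fibre point and sees the $\tilde h$-dependence cancel via $\tilde h^T\tilde h = I$ and $Y_\alpha(\tilde h)=\tilde h A_\alpha$, you instead invoke $H$-invariance to evaluate at $h=e$ and pick up the skew-symmetry sign --- the same identities appear in both. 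The one step the paper carries out explicitly that you absorb into the assertion ``the base part of $H_i$ is $\sum_j h_{ji}X_j$'' is the verification that the non-horizontal components of $d\pi(H_i)$ vanish (their block-by-block solution of~\eqref{eq:1stsys} using the triangular structure of the matrix $F$), which is where the adaptedness of $\omega$ is actually used.
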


While the formula in the theorem above is expressed in a local
trivialisation, the operator $\Delta$ and the stochastic development
of the canonical sub-Riemannian diffusion on the nilpotent group $N$
are coordinate invariant objects. 

The key result of this article is stated in the next theorem.

\begin{theorem}\label{thm:develop}
Suppose $(M,\cD,g)$ is an equinilpotentisable sub-Riemannian
manifold with a symmetry group $H$. Then there exists a Cartan
connection $\omega$ such that the stochastic process arising as the
stochastic development of the canonical
sub-Riemannian diffusion on the nilpotent model $N$ has generator
$\frac{1}{2}\Delta_\cP$ if and only if every
one-dimensional
sub-representation of $H$ on $\fn_{-1}$ corresponds to a divergence-free
vector field in $\cD$.
\end{theorem}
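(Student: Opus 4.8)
The plan is to compare the generator produced by Theorem~\ref{thm:main_formula} with the Popp sub-Laplacian~\eqref{eq:sr_laplacian} and to reduce the matching of the two operators to a fibrewise linear-algebra problem governed by the representation of $H$ on $\fn_{-1}$. By Theorem~\ref{thm:main_formula} the developed process has generator $\tfrac12\Delta$ with $\Delta = \sum_{i=1}^{k_1} X_i^2 + \sum_\alpha (\Gamma^\alpha)^T A_\alpha X$, so requiring that this equal $\Delta_\cP$ is equivalent, since the second-order parts agree automatically, to matching the first-order terms. Writing $d_i = -\sum_{l=1}^n c_{il}^l$, which by~\eqref{eq:sr_laplacian} and the general divergence form of $\Delta_\nu$ equals $\divv_\cP(X_i)$, the requirement becomes
\[
  \sum_{\alpha=1}^{\dim H} (\Gamma^\alpha)^T A_\alpha X = \sum_{i=1}^{k_1} d_i X_i .
\]
Expanding the left-hand side in the frame and using that each $A_\alpha\in\fh\subset\mathfrak{so}(k_1)$ is skew-symmetric, the coefficient of $X_m$ is $-\sum_\alpha (A_\alpha\Gamma^\alpha)_m$, so the condition is the single fibrewise linear equation $\sum_\alpha A_\alpha \Gamma^\alpha = -d$ for the horizontal Christoffel vectors $\Gamma^\alpha=(\Gamma^\alpha_1,\dots,\Gamma^\alpha_{k_1})^T$.

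The next step is to determine when this equation is solvable. The linear map $\Phi\colon (\Gamma^\alpha)_\alpha \mapsto \sum_\alpha A_\alpha\Gamma^\alpha$ has image equal to $\fh\cdot\fn_{-1}$, the image of the action map $\fh\otimes\fn_{-1}\to\fn_{-1}$, a subspace fixed once and for all by equinilpotentisability. Because $H$ acts orthogonally, its orthogonal complement is precisely the space $\fn_{-1}^{\fh}$ of $\fh$-fixed vectors: indeed $\langle A_\alpha v,w\rangle=0$ for all $\alpha,v$ is equivalent to $A_\alpha w=0$ for all $\alpha$. Moreover any one-dimensional sub-representation of $H$ on $\fn_{-1}$ is automatically trivial, since an invariant line must be annihilated by every skew-symmetric $A_\alpha$, so the one-dimensional sub-representations are exactly the lines inside $\fn_{-1}^{\fh}$. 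Hence $d(q)$ lies in the image of $\Phi$ for every $q$ if and only if $d\perp\fn_{-1}^{\fh}$, i.e.\ if and only if $\langle\ell,d\rangle=0$ for every $\ell$ spanning a one-dimensional sub-representation.

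It then remains to translate this orthogonality into the stated divergence condition. For a fixed $\ell\in\fn_{-1}^{\fh}$ the associated section $X_\ell=\sum_i \ell_i X_i$ of $\cD$ has constant coefficients in the frame, so $\divv_\cP(X_\ell)=\sum_i \ell_i \divv_\cP(X_i)=\langle\ell,d\rangle$. Thus $X_\ell$ is divergence-free exactly when $\ell\perp d$, and the solvability criterion is identical to the assertion that every one-dimensional sub-representation corresponds to a divergence-free vector field in $\cD$. This gives necessity directly: existence of a suitable Cartan connection forces $\sum_\alpha A_\alpha\Gamma^\alpha=-d$ to be solvable at each point, hence $d\perp\fn_{-1}^{\fh}$. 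For sufficiency, when the divergence condition holds, $-d(q)$ lies in the fixed image of $\Phi$ at every $q$, and choosing for instance the minimal-norm solution $\Gamma^\alpha(q)=\Phi^{+}(-d(q))$ yields Christoffel symbols depending linearly, hence smoothly, on $d$.

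The main obstacle is the final, genuinely Cartan-geometric step: one must exhibit an actual adapted Cartan connection $\omega$ whose horizontal Christoffel symbols $\Gamma^\alpha_i$ for $1\le i\le k_1$ realise these prescribed smooth values while still satisfying the normalisation---the torsion-free-like condition requiring a prescribed part of the curvature two-form to vanish---that makes $\omega$ adapted. I expect this to be where the real work lies, and it is carried out using the structure theory of Section~\ref{sec:cartan}: one shows that within the family of adapted connections the horizontal part of $\omega_\fh$ can be prescribed freely, the remaining components being pinned down by the normalisation, so that the values of $\Gamma^\alpha$ solving $\sum_\alpha A_\alpha\Gamma^\alpha=-d$ are indeed attained by an admissible connection. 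Combining this realisation with the equivalences above closes both directions of the argument.
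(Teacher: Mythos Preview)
Your necessity argument and the linear-algebraic reduction are correct and essentially coincide with the paper's: the paper likewise observes that a vector spanning a one-dimensional sub-representation is annihilated by every $A\in\fh$, so the corresponding frame field cannot appear in the first-order part of~\eqref{eq:main_formula} and must therefore be Popp-divergence-free. Your identification of the solvability of $\sum_\alpha A_\alpha\Gamma^\alpha=-d$ with the condition $d\perp\fn_{-1}^{\fh}$ is also correct.

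The genuine gap is exactly where you locate it, but your speculated resolution is not what the paper does and is not obviously justified. The paper does \emph{not} argue that the horizontal Christoffel symbols can be prescribed freely among adapted connections. Instead it passes through curvature: Proposition~\ref{prop:diff} establishes the invariant identity $\Delta-\Delta_\cP=R\circ\kappa_\fn$, with $R$ the trace-and-project map of~\eqref{eq:def_r}, so the task becomes forcing $\sum_j\Omega^j_{ji}=0$. This is recast as orthogonality of the curvature function $\kappa$ to the $H$-module $\cS=\operatorname{span}\{\sum_j e_j\otimes e^j\wedge e^i:1\le i\le k_0\}$, and the paper then constructs a normal module $\cN\subset\cS^\perp$ by proving $\cS\cap(\im\p_+)^\perp=\{0\}$; it is precisely this step that consumes the hypothesis on one-dimensional sub-representations. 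Existence of an adapted Cartan connection with curvature in $\cN$ then follows from Morimoto's theorem, and the resulting connection is global and $H$-equivariant by construction.

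Your direct route via $\Gamma^\alpha=\Phi^{+}(-d)$ would, if completed, be more elementary, but it conceals real work: you would need to show that these locally defined $\Gamma^\alpha$ patch to a global $\fh$-valued connection form transforming as in~\eqref{eq:form_h} under change of orthonormal frame, and separately that a compatible soldering part $\omega_\fn$ can be chosen so that the combined $\omega$ satisfies the Cartan-connection axioms (pointwise isomorphism onto $\fg$ and $H$-equivariance). None of this is addressed, and the paper avoids it entirely by invoking the normal-module machinery.
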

The proof of Theorem~\ref{thm:develop} is constructive and it gives an
explicit description for a possible choice of a Cartan
connection. The result particularly implies that there exist suitable Cartan
connections for many interesting examples including all
equinilpotentisable contact structures and structures with the growth
vector $(2,3,5)$. The latter arise when rolling distributions of surfaces, and
it can be applied to modelling rolling of spherical robots on an
unknown non-flat ground such as soil.

\medskip

The article has the following structure.
In Section~\ref{sec:cartan_geom}, we provide an overview of Cartan
geometry, with a focus on Cartan geometry on
a sub-Riemannian manifold in Section~\ref{sec:scartan}, and
as an illustration we show how to understand the Levi--Civita
connection on a Riemannian manifold as a Cartan connection. In
Section~\ref{sec:develop}, we discuss how the development with
respect to a Cartan connection of a curve in the model space can be
characterised by a system of ordinary differential equations, and
we use this to introduce a notion of stochastic development for
equinilpotentisable sub-Riemannian manifolds.
We further prove Theorem~\ref{thm:main_formula} and we show that
the Cartan geometry approach recovers the characterisation of Brownian
motion on a Riemannian manifold via stochastic development.
In Section~\ref{sec:cartan}, we establish
Theorem~\ref{thm:develop}, and we
illustrate the construction of a suitable Cartan connection
for manifolds modelled by free nilpotent
structures with two generators, which include 3D contact
structures. We further provide an example of a manifold where
a stochastic development of the canonical sub-Riemannian diffusion on
the model space never gives rise to the stochastic process with
generator $\frac{1}{2}\Delta_\cP$.

\proof[Acknowledgement]
The first author was supported by the ANR project Quaco
  ANR-17-CE40-0007-01.
The second author was supported by the Fondation Sciences
Math{\'e}matiques de Paris.

\section{Overview of Cartan geometry}
\label{sec:cartan_geom}
We provide a general overview of Cartan geometry with a focus
towards Cartan geometry on a sub-Riemannian manifold. In this section,
we use the Einstein summation convention.

Before giving the general definitions, we start by interpreting
Riemannian geometry as a Cartan geometry. Let us consider a
Riemannian manifold $(M,g)$ of dimension $n$ together with a
connection $\nabla$ on the tangent bundle $TM$. In a local
orthonormal frame $(X_1,\dots,X_n)$ of $TM$, the connection $\nabla$
is uniquely characterised by the Christoffel symbols
$\Gamma^k_{ij}$, for $i,j,k\in\{1,\dots,n\}$, which are given by, for
$i,j\in\{1,\dots,n\}$,
\begin{equation*}
  \nabla_{X_i} X_j = \Gamma^k_{ij}X_k\;.
\end{equation*}
If $(\theta^1,\dots,\theta^n)$ is a local coframe dual to
$(X_1,\dots,X_n)$, we can equivalently define a connection via
one-forms $\theta_i^j$, for $i,j\in\{1,\dots,n\}$, which satisfy
\begin{equation*}
  \nabla_{X} X_i = \theta^j_i(X) X_j\;.
\end{equation*}
Comparing this with the previous definition of a connection in terms
of Christoffel symbols, we find that, for all $i,j\in\{1,\dots,n\}$,
\begin{equation*}
  \theta^j_i = \Gamma^j_{ki}\theta^k\;.
\end{equation*}

The Levi--Civita connection is the unique torsion-free connection on $TM$
which is metric. For this connection, the metric
compatibility condition implies that, for all $i,j,k\in\{1,\dots,n\}$,
we have
\begin{equation*}
  \Gamma^k_{ij}+\Gamma^j_{ik} = 0\;,
\end{equation*}
which in turn gives the antisymmetry condition that, for all
$i,j\in\{1,\dots,n\}$,
\begin{equation*}
  \theta_i^j + \theta_j^i = 0\;.
\end{equation*}
To describe the torsion-free property of the Levi--Civita
connection in terms of the dual frame $(\theta^1,\dots,\theta^n)$, we
construct the Lie-algebra-valued one-form $\theta_{\mathfrak{h}}\in
\Omega(M,\mathfrak{so}(n))$ and the one-form $\theta_{\fn}\in
\Omega(M,\R^n)$ defined by, for $i,j\in\{1,\dots,n\}$,
\begin{equation*}
  \left(\theta_{\mathfrak{h}}\right)^i_j=\theta^i_j
  \quad\mbox{and}\quad
  \left(\theta_{\fn}\right)^i = \theta^i\;.
\end{equation*}
We call $\theta_{\mathfrak{h}}$ the Levi--Civita gauge and
$\theta_{\fn}$ the soldering gauge. Given a vector space $V$,
we define an exterior product between 
$\End(V)$-valued one-forms and $V$-valued one-forms by requiring that,
for $i,j\in\{1,\dots,n\}$ and all $A_i \in \End(V)$ as well as all
$v_j \in V$, we have
\begin{equation*}
  \left(A_i \otimes \theta^i\right)\wedge
  \left(v_j \otimes \theta^j\right) = A_i(v_j)
  \theta^i \wedge \theta^j\;.
\end{equation*}

The Levi--Civita gauge $\theta_{\mathfrak{h}}$ and the soldering gauge
$\theta_{\fn}$ characterise the torsion-freeness of a metric
connection as follows, see e.g. Sharpe~\cite{sharpich}.
\begin{proposition}
  A metric connection $\nabla$ is torsion-free if and only if for any
  orthonormal coframe $(\theta^1,\dots,\theta^n)$ the structure equations
  \begin{equation}
  \label{eq:torsion}
    \db\theta_\fn + \theta_{\fh}\wedge\theta_{\fn}=0
  \end{equation}
  are satisfied, that is, for all $i,j\in\{1,\dots,n\}$, we have
  \begin{equation*}
    \db\theta^i + \theta_j^i \wedge \theta^j = 0\;.
  \end{equation*}
\end{proposition}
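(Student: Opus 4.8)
The plan is to show, for each index $i\in\{1,\dots,n\}$ and each pair of frame fields $X_k,X_l$, that the value of the two-form $\db\theta^i+\theta^i_j\wedge\theta^j$ on $(X_k,X_l)$ equals the $i$-th coframe component of the torsion $T(X_k,X_l)$, where $T(X,Y)=\nabla_X Y-\nabla_Y X-[X,Y]$. Since a connection is torsion-free exactly when $T$ vanishes, and a two-form is determined by its values on frame pairs, this pointwise identity immediately yields the claimed equivalence. The metric hypothesis itself plays no role in the identity; it is used only to guarantee that the connection one-forms assemble into an $\mathfrak{so}(n)$-valued form, so that the left-hand side of the structure equation is precisely $\db\theta_\fn+\theta_\fh\wedge\theta_\fn$ with $\theta_\fn$ the soldering gauge.

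First I would compute the torsion in the coframe. Using $\nabla_{X_k}X_l=\theta^j_l(X_k)X_j$ and applying $\theta^i$, which selects the $j=i$ term, I obtain
\[
  \theta^i(T(X_k,X_l))=\theta^i_l(X_k)-\theta^i_k(X_l)-\theta^i([X_k,X_l])\;.
\]
Next I would evaluate the exterior derivative via the Cartan formula $\db\theta^i(X_k,X_l)=X_k(\theta^i(X_l))-X_l(\theta^i(X_k))-\theta^i([X_k,X_l])$. Because the coframe is dual to the frame, each $\theta^i(X_l)=\delta^i_l$ is constant, so the two derivative terms vanish and $\db\theta^i(X_k,X_l)=-\theta^i([X_k,X_l])$.

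Then I would expand the wedge term with the convention $(\alpha\wedge\beta)(X,Y)=\alpha(X)\beta(Y)-\alpha(Y)\beta(X)$, so that duality collapses the sum over $j$ and gives
\[
  (\theta^i_j\wedge\theta^j)(X_k,X_l)=\theta^i_l(X_k)-\theta^i_k(X_l)\;.
\]
Adding the last two displays and comparing with the torsion computation yields $(\db\theta^i+\theta^i_j\wedge\theta^j)(X_k,X_l)=\theta^i(T(X_k,X_l))$, which finishes the argument. There is no genuine obstacle here beyond bookkeeping: the only points requiring care are fixing consistent sign conventions for $\db$ and for the wedge product, and observing that duality of the frame and coframe renders both the derivative terms in Cartan's formula and the Kronecker deltas in the wedge expansion trivial to handle.
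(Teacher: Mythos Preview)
Your argument is correct: the identity $(\db\theta^i+\theta^i_j\wedge\theta^j)(X_k,X_l)=\theta^i(T(X_k,X_l))$ follows exactly as you compute, and the equivalence is immediate from it. The paper does not supply its own proof of this proposition but simply refers the reader to Sharpe~\cite{sharpich}, so there is nothing to compare your approach against; your direct verification via Cartan's formula and the duality of frame and coframe is the standard textbook argument and is entirely adequate.
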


Since the structure equations have to  hold for any frame, let us
consider what happens in a different frame. Applying a rotation
$h\in C^1(M,SO(n))$ to the frame with respect to which
$\theta_{\mathfrak{h}}$ and $\theta_{\fn}$ are defined yields
a frame in which $\theta_{\fn}^{\mathrm{new}}$ and
$\theta_{\fh}^{\mathrm{new}}$ are given by
\begin{align}
  \theta_{\fn}^{\mathrm{new}} &= h^{-1}\theta_{\fn}\;,\nonumber\\
  \theta^{\mathrm{new}}_{\fh} &= h^{-1}\dd h + h^{-1}\theta_{\fh} h\;.\label{eq:form_h}
\end{align}
These expressions show that $\theta_{\fh}$ is a pull-back of a
principle $SO(n)$-connection via a trivialising section of the
orthonormal frame bundle $\cO(M)$, that is, if $s\colon M \to
\cO(M)$ is the local section of $\cO(M)$ defined by the local
orthonormal frame $(X_1,\dots,X_n)$ then there exists a principle
$SO(n)$-connection with one-form $\omega_\fh$ such that
\begin{equation*}
  \theta_\fh = s^* \omega_\fh\;.
\end{equation*}
When changing the local section from $s$
to $sh$, we obtain $\theta^{\mathrm{new}}_{\fh}$ given
by~\eqref{eq:form_h}.
Similarly, the soldering gauge $\theta_{\fn}$ is a
  pull-back of the canonical soldering form $\omega_{\fn}$, that is,
\begin{equation*}
  \theta_\fn = s^* \omega_\fn\;.
\end{equation*}
The canonical soldering form $\omega_{\fn}$ can be defined on $\cO(M)$
in a totally
invariant manner. Let $\pi\colon \of(M)\to M$ be the projection
mapping. Then, for all $v\in T\of(M)$ and all
$f\in\of(M)$, we set
\begin{equation}
  \label{eq:sold_riem}
  \left(\omega_{\fn}\right)_f(v) =
  f^{-1}\dd\pi(v)\;,
\end{equation}
where $f \in \of(M)$ is considered as a map
$f\colon \R^n \to T_{\pi(f)} M$.

We combine $\omega_\fh$ and $\omega_{\fn}$ into a single matrix-valued
one-form $\omega$ on the frame bundle $\of(M)$ given by
\begin{equation*}
  \omega =
  \begin{pmatrix}
    \omega_{\fh} & \omega_{\fn}\\
    0 & 0
  \end{pmatrix}\;.
\end{equation*}
This one-form is an example of a Cartan connection. It takes values in the
Lie algebra $\mathfrak{se}(n)$ corresponding to the special Euclidean
Lie group $SE(n)$.
The curvature two-form $\Omega$ associated with a Lie-algebra-valued
one-form $\omega$
is given by
\begin{equation}
  \label{eq:curvature}
  \Omega = \db\omega + \frac{1}{2}[\omega,\omega]\;,
\end{equation}
for $[\cdot,\cdot]$ the commutator of two Lie-algebra-valued
one-forms, which is defined by
\begin{equation*}
  [\omega_1,\omega_2](X,Y) =
  [\omega_1(X),\omega_2(Y)]+[\omega_2(X),\omega_1(Y)]\;. 
\end{equation*}
In particular, we see that
\begin{equation}\label{eq:commute}
  [\omega_1,\omega_2]= [\omega_2,\omega_1]\;,
\end{equation}
and in a local trivialisation $(\theta^1,\dots,\theta^n)$, we have
\begin{equation}\label{eq:formula_com}
  \left[A_i \otimes \theta^i, B_j \otimes \theta^j\right]
  = \left[A_i,B_j\right]\otimes \theta^i \wedge \theta^j \;.
\end{equation}

The condition~\eqref{eq:torsion} is then equivalent to
the vanishing of the $\R^n$-valued part 
of the curvature two-form $\Omega$,
which is exactly given by the torsion of a metric connection.

Applying this whole language to the Euclidean space $\R^n$, we find
that the Cartan connection constructed above is simply the
Maurer--Cartan one-form $\omega_{SE(n)}$ of the special Euclidean group, that
is, for $g\in SE(n)$, we have
\begin{equation}
\label{eq:maurer}
  \left(\omega_{SE(n)}\right)_g= (L_{g^{-1}})_*\;,
\end{equation}
which has zero curvature. Note that the Cartan geometry contains
information both about the model space $\R^n$ and about its symmetry
group $SE(n)$. 

Keeping the Riemannian geometry example in mind, we give a general
overview of Cartan
geometries which are generalisations of Klein geometries. Every
homogeneous space can be identified with a
quotient $G/H$ of a Lie group $G$ by a subgroup $H$. The
Maurer--Cartan form $\omega_G$ on $G$ can be thought of as a
connection on the symmetry group $G$ of the homogeneous space $G/H$ with
values in the Lie algebra $\mathfrak{g}$ of
$G$. It is defined exactly as in~\eqref{eq:maurer} with $G$ instead of
$SE(n)$.
In Cartan geometry,
we replace $G/H$ by a $H$-principle bundle $P$ over a manifold $M$ and
the Maurer--Cartan form $\omega_G$ by a $\fg$-valued one-form on $P$.
We adopt
the convention to denote the Lie algebra associated with a Lie group by
the corresponding Gothic letter.
\begin{definition}
Given a smooth manifold $M$, a Lie group $G$ and a subgroup
$H\subset G$, a Cartan geometry $(P,\omega)$ on $M$ modelled on
$(\fg,\fh)$ consists of the following data.
\begin{enumerate}
\item A right principle $H$-bundle $\pi \colon P \to M$.
\item A $\fg$-valued one-form $\omega$ on $P$, called a Cartan
  connection, which satisfies
	\begin{enumerate}
    \item $\omega_p \colon T_p P \to \mathfrak{ g}$ is an isomorphism
      for all $p\in P$,
    \item $R^*_{h}\omega = \operatorname{Ad}_{h^{-1}}\omega$ for all $h\in H$, and
    \item $\omega(X^*) = X$ for all $X^* \in \Gamma(TP)$ and
      $X \in \fh$ which are related by,
      for all $f\in C^\infty(P)$ and all $p\in P$,
      \begin{equation*}
        \left(X^*f\right)(p) =
        \left.\frac{\db}{\db t}\right|_{t=0}f\left(p\exp(tX)\right).
      \end{equation*}
    \end{enumerate}
\end{enumerate}
The homogeneous space $G/H$ is called the model space for the
corresponding Cartan geometry.
\end{definition}

In the case of a Riemannian manifold $M$ of dimension $n$, a
Cartan geometry is
modelled over $(\mathfrak{se}(n),\mathfrak{so}(n))$,
the model space is given
by the corresponding quotient $\R^n \simeq SE(n)/SO(n)$, and
$P$ is the orthonormal frame bundle $\of(M)$ viewed as a $SO(n)$-principle
bundle. The connection $\omega$ constructed above then indeed
satisfies the properties of a Cartan
connection, cf. Sharpe~\cite{sharpich}. Note that the Levi--Civita
connection is
just a particular instance of a Cartan connection which is
characterised by the vanishing of the torsion part of the curvature
two-form.

The usefulness of Cartan geometries for our work arises from the property that
they possess a good notion of development of curves.
Let us fix $q\in M$.
A curve $\gamma_{G/H}\colon [0,1]\to G/H$ on the model space $G/H$ is
developed via a Cartan connection
$\omega$ to a curve $\gamma_M\colon [0,1]\to M$
with $\gamma_M(0)=q$
on the manifold $M$ as follows.
\begin{enumerate}
  \item The curve $\gamma_{G/H}$ is lifted to a curve $\gamma_G\colon
    [0,1] \to G$.
  \item Fixing some $p\in P$ such that $\pi(p)=q$, we define the
    development $\gamma_P\colon [0,1]\to P$ of
    the lift $\gamma_G$ by requiring
    \begin{equation}
      \label{eq:develop}
      \gamma_P^* \omega = \gamma_G^* \omega_G
    \end{equation}
    subject to $\gamma_P(0) = p$.
\item The development $\gamma_M$ of $\gamma_{G/H}$ is given by
  $\gamma_M = \pi(\gamma_P)$.
\end{enumerate}
The relation~\eqref{eq:develop} defines the development $\gamma_P$
of the lift $\gamma_G$
uniquely once an initial point for $\gamma_P$ is
specified. Moreover, according to
Sharpe~\cite[Proposition 5.4.13]{sharpich}
we have the following property.
\begin{theorem}
\label{thm:lift}
In a Cartan geometry $(P,\omega)$ on a manifold $M$ modelled on
$(\fg,\fh)$, the development $\gamma_M$ of a curve $\gamma_{G/H}$
does neither depend on the choice of a lift $\gamma_{G}$ nor on
the choice of a lift $p=\gamma_P(0)$ of $q=\gamma_M(0)$.
\end{theorem}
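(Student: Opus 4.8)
The plan is to rewrite the defining relation~\eqref{eq:develop} as an ordinary differential equation on $P$ and then to exploit the $H$-equivariance encoded in the Cartan connection. Since $\omega_p\colon T_pP\to\fg$ is a linear isomorphism for every $p\in P$ by property~(a), the relation $\gamma_P^*\omega=\gamma_G^*\omega_G$ is equivalent to
\begin{equation*}
  \dot\gamma_P(t)=\left(\omega_{\gamma_P(t)}\right)^{-1}\!\left(\left(\gamma_G^*\omega_G\right)_t\right),
\end{equation*}
that is, to the flow of a time-dependent vector field on $P$ whose time dependence is prescribed by the fixed curve $\gamma_G$ in $G$. The usual existence and uniqueness theorem for ordinary differential equations then shows that, once the initial value $\gamma_P(0)=p$ is fixed, the lift $\gamma_P$ is uniquely determined, so that $\gamma_M=\pi\circ\gamma_P$ is a well-defined curve on $M$ with $\gamma_M(0)=\pi(p)=q$.

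The core of the proof is an equivariance computation. Any two lifts of $\gamma_{G/H}$ differ by the right $H$-action, so a second lift has the form $\gamma_G\cdot h$ for some smooth $h\colon[0,1]\to H$, and the Maurer--Cartan form obeys the transformation rule
\begin{equation*}
  \left(\gamma_G\,h\right)^*\omega_G=\Ad_{h^{-1}}\left(\gamma_G^*\omega_G\right)+h^*\omega_G\;.
\end{equation*}
I would then verify that the curve $t\mapsto R_{h(t)}\!\left(\gamma_P(t)\right)$ is precisely the development of $\gamma_G\,h$. Splitting its velocity into the image under $(R_{h(t)})_*$ of $\dot\gamma_P(t)$ and the value of the fundamental vector field generated by $h(t)^{-1}\dot h(t)\in\fh$, and then applying property~(b), $R_h^*\omega=\Ad_{h^{-1}}\omega$, to the first term and property~(c), $\omega(X^*)=X$, to the second, gives
\begin{equation*}
  \left(R_h\circ\gamma_P\right)^*\omega=\Ad_{h^{-1}}\left(\gamma_P^*\omega\right)+h^*\omega_G=\Ad_{h^{-1}}\left(\gamma_G^*\omega_G\right)+h^*\omega_G=\left(\gamma_G\,h\right)^*\omega_G\;.
\end{equation*}
By the uniqueness from the first step, $R_h\circ\gamma_P$ is therefore the development of the lift $\gamma_G\,h$ with initial point $R_{h(0)}(p)=p\cdot h(0)$.

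Both independence statements now drop out because $\pi\circ R_{h(t)}=\pi$. Developing a lift and developing any other lift $\gamma_G\,h$ that shares its initial value, i.e.\ with $h(0)=e$, produces developments related by the fibrewise action $R_h$, whence $\pi(R_h\circ\gamma_P)=\pi(\gamma_P)=\gamma_M$; this is independence of the choice of lift $\gamma_G$. Independence of the base point $p$ is the instance of the same computation with a constant $h\equiv a\in H$: the two lifts $p$ and $p\cdot a$ of $q$ then yield developments related by $R_a$, which again project to the same $\gamma_M$. The step I expect to be the genuine obstacle is the velocity splitting in the middle paragraph: one has to justify carefully that the derivative of $t\mapsto\gamma_P(t)h(t)$ decomposes as claimed and that the vertical contribution is exactly the fundamental vector field of $h(t)^{-1}\dot h(t)=(h^*\omega_G)_t$, so that the defining properties~(b) and~(c) of the Cartan connection apply term by term; granting this identity, the remainder is bookkeeping with the Maurer--Cartan rule and the $H$-invariance of $\pi$.
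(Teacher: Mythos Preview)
Your argument is correct. The paper itself does not give a proof of this theorem: it simply attributes the result to Sharpe~\cite[Proposition~5.4.13]{sharpich} and states it without further justification. What you have written is essentially the standard proof one finds in Sharpe's book --- rewrite the development relation as an ODE via the pointwise isomorphism $\omega_p\colon T_pP\to\fg$, compute how both sides transform under right translation by a curve $h\colon[0,1]\to H$ using the Maurer--Cartan identity on $G$ and properties~(b) and~(c) of the Cartan connection on $P$, and then quotient out by the $H$-action via $\pi$.

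The velocity-splitting step you flag as the potential obstacle is genuine but routine: differentiating $t\mapsto\gamma_P(t)\cdot h(t)$ through the action map $P\times H\to P$ gives $(R_{h(t)})_*\dot\gamma_P(t)$ plus the value at $\gamma_P(t)h(t)$ of the fundamental vector field generated by $(L_{h(t)^{-1}})_*\dot h(t)=(h^*\omega_G)_t\in\fh$, exactly as you need. With that identity in hand, your computation goes through verbatim and there is nothing to add.
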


The same scheme works in the opposite way, where the
relation~\eqref{eq:develop} is used to define an anti-development on
the model space $G/H$ of a curve $\gamma_M\colon [0,1]\to M$ on the
manifold.

\subsection{Cartan geometry on a sub-Riemannian manifold}
\label{sec:scartan}
As discussed in the Introduction, a local model for an
equinilpotentisable sub-Riemannian manifold $(M,\cD,g)$ is the
nilpotent Carnot 
group $N$ with the Lie algebra $\fn = \gr(T_q M)$. It inherits the
natural grading
\begin{equation*}
  \fn= \fn_{-1} \oplus \dots \oplus \fn_{-m} 
\end{equation*}
and a scalar product $g_{-1}$ on $\fn_{-1}$. Thus, $N$ is itself a
sub-Riemannian manifold. 

In order to define a Cartan geometry on the sub-Riemannian manifold
$(M,\cD,g)$, we need to consider the Lie algebra
\begin{equation*}
  \fg = \fn \oplus \fh\;,
\end{equation*}
where $\fh$ is the Lie algebra associated with the Lie group $H$ of
all automorphisms of
$\fn$ which preserve $g_{-1}$, that is,
\begin{align}
  \begin{aligned}
    \label{eq:symm}
    \fh = \{\varphi\colon\fn \to \fn
    \mbox{ such that }
    & g_{-1}(\varphi(\cdot),\cdot)+g_{-1}(\cdot,\varphi(\cdot))=0\\
    \mbox{ and }
    & \varphi([X,Y])=[\varphi(X),Y]+[X,\varphi(Y)]
    \mbox{ for all }
    X,Y \in \fn \}\;.
  \end{aligned}
\end{align}
In particular, the Lie algebra $\fh$ is isomorphic to a sub-algebra of
$\mathfrak{so}(\fn_{-1})$.
Equivalently, we could have asked $H$ to preserve the metric on
all of $\fn$. Indeed, since the metric on $\fn$ is constructed by
identifying each $\fn_{-l}$ with a subspace of $l$ tensor products
of $\fn_{-1}$, the metric on $\fn_{-l}$ is simply given by the
induced metric from the tensor product, and as $H$ preserves the
structure of the brackets, $\fn_{-l}$ corresponds to an orthogonal
sub-representation of $H$ in $\otimes_{i=1}^l \fn_{-1}$.

We emphasise that the Lie algebra $\fg$ is a graded
space with elements of $\fh$ having degree zero and elements of
$\fn_{-i}$ having degree $-i$. Similarly, we define the degree of
elements of the dual spaces $\fn_{-i}^*$ to be $i$. This endows any
tensor product of those spaces with a grading. For example, we use
later that all elements from $\fn_{-i} \otimes \fn_{-j}^*
\wedge \fn_{-k}^*$ have degree $j+k-i$. In order to make calculations
consistent, we further treat the zero element as an element which can
take any degree. We denote by $+$ in the subscript the subspace
spanned by the elements of positive degree. 

The second ingredient needed to define a Cartan geometry is a right
principle bundle $P$.
Similar with Riemannian geometry, $P$ is a bundle of graded frames
which is formed
by all Lie algebra morphisms 
\begin{equation*}
  f \colon \fn \to \gr(TM)
\end{equation*}
compatible with the metric. This means that for any orthonormal basis 
$\{e_1,\dots,e_{k_1}\}$ of $\fn_{-1}$ the elements $X_i=f(e_i)$, for
$i\in\{1,\dots,k_1\}$, should
form an orthonormal basis of $\cD$.
The bundle $P$ has $H$ as its structure group, and it
is a reduction of the orthonormal frame bundle 
$\cO(\cD)$ to the group $H$. In what follows, we use the notation
$\cO_H(\cD)$ for this bundle $P$.

The graded frame bundle possess the canonical soldering form $\omega_\fn$ 
which is defined as follows. For all $v\in T\cO_H(\cD)$ and all 
$f\in\cO_H(\cD)$, we set
\begin{equation*}
  \left(\omega_\fn\right)_f(v) = f^{-1}\gr(\db\pi(v))\;,
\end{equation*}
where $\gr\colon TM\to\gr(TM)$ is defined by~\eqref{grref}.

Even though $\gr (T_{\pi(f)} M)$
is isomorphic to $T_{\pi(f)} M$, it is not a canonical
isomorphism. Indeed, any element of an adapted frame is defined only
modulo terms of higher degree.
This means that, for any $q\in M$, we have to choose an
additional isomorphism $T_{q}M \to \gr(T_{q}M) $ which
would allow us to decompose a vector field $X\in \Gamma(TM)$ according
to the filtration
\begin{equation*}
  \gr(X) = X_{-1}\oplus ... \oplus X_{-m}\;.
\end{equation*}
Any choice of a soldering gauge provides us with a needed automorphism. 
The canonical soldering form gives us only the degree one components of
the needed isomorphism.

\begin{remark}
  In the study of filtered manifolds, one usually has to apply a
  procedure known as the Tanaka prolongation, which consists
  of adding higher derivations of $\fg$. However, the fact that we
  study the metric geometry on equinilpotentisable sub-Riemannian
  manifolds forces
  the Tanaka prolongation to be trivial,
  see~\cite[Section~2]{constant_sub}. Hence, all information we need
  is already contained in $\fg$.
\end{remark}

For a generic sub-Riemannian manifold, a torsion-free
connection does not exist. However, we describe below how to
construct linear conditions on the curvature function which
guarantees the existence of a unique Cartan connection for a given
pair $(\fn,g_{-1})$.

\begin{definition}
The curvature function $\kappa\colon P \to \hom(\wedge^2 \fn,\fg)$ of a
Cartan connection $\omega$ is defined as
  \begin{equation*}
    \kappa(p)(\cdot,\cdot)=
    \Omega_p\left(\omega^{-1}_p(\cdot),\omega^{-1}_p(\cdot)\right)\;.
  \end{equation*}
\end{definition}

We recall that the Lie algebra differential $\p\alpha$ of
$\alpha\in \hom(\wedge^k \fn,\fg)$ is defined, for any vectors
$X_0,\dots,X_k \in \fn$, by
\begin{align*}
  \p \alpha(X_0,X_1,\dots,X_k)
  &= \sum_{i=0}^k (-1)^i X_i \cdot
    \alpha\left(X_0,\dots,\hat X_i,\dots,X_k\right) \\
  &\qquad +\sum_{0\leq i < j \leq k}(-1)^{i+j}
    \alpha\left([X_i,X_j],X_0,\dots,\hat{X}_i,\dots,\hat X_j,\dots, X_k\right)\;,
\end{align*}
where hat means the omission of the corresponding vector and where
$X_i \cdot{}$ denotes the
adjoint action $\ad$ of $X_i$. 
For a basis
$\{e_1,\dots,e_{\dim\fg}\}$ of $\fg$ and the corresponding
dual basis $\{e^1,\dots,e^{\dim\fg}\}$, the Lie algebra
differential $\p$ satisfies
\begin{equation*}
  \p\left(e_i \otimes e^j\right)
  = \p e_i \wedge e^j + e_i \otimes  \p e^j\;,
\end{equation*}
where $\p e_i = -\ad e_i $ for $e_i \in \fn$.

The construction of Cartan connections on sub-Riemannian
manifolds satisfying a type of torsion-free-like condition relies on
finding suitable normal modules of $\hom(\wedge^2 \fn,\fg)_+$. This is
also the heart of Section~\ref{sec:cartan}.

\begin{definition}
  A subspace $\cN\subset \hom(\wedge^2 \fn,\fg)_+$ is called a normal module if
  \begin{enumerate}
  \item $\cN$ is a $H$-module with respect to the adjoint action of $H$
    on  $ \hom(\wedge^2 \fn,\fg)$, and
  \item we have $\hom(\wedge^2 \fn,\fg)_+ = \cN  \oplus \im
    \p(\hom(\fn,\fg)_+)$.
  \end{enumerate}
\end{definition}

To relate Cartan connections and normal modules, we
  further need the notion of an adapted Cartan connection.

\begin{definition}
A Cartan connection $\omega$ on a sub-Riemannian
manifold $(M,\cD,g)$ is called adapted if for any arbitrary section
$s\colon M \to \cO_H(\cD)$ the $\fn$-valued part of the Cartan gauge
$s^*\omega$ forms a coframe dual to an adapted frame. 
\end{definition}

We then have the following theorem,
see Morimoto~\cite[Theorem~3.10.1]{morimoto}.

\begin{theorem}
Given an equinilpotentisable sub-Riemannian manifold $(M,\cD,g)$ and a
normal module $\cN \subset \hom(\wedge^2 \fn,\fg)_+$ there exists a
unique Cartan geometry $(\cO_H(\cD),\omega)$ on $M$ modelled on
$(\fg,\fh)$ such that the Cartan connection $\omega$ is adapted and
the corresponding curvature function $\kappa$ takes values in $\cN$.
\end{theorem}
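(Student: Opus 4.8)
The plan is to follow the standard Tanaka--Morimoto normalisation procedure, constructing the desired connection degree by degree and reading off uniqueness from the triviality of the Tanaka prolongation noted in the Remark. First I would fix the principal bundle $\cO_H(\cD)$ together with its canonical soldering form $\omega_\fn$, and observe that an adapted Cartan connection always exists: choosing any adapted coframe and any $\fh$-valued connection form compatible with the structure group $H$ produces a candidate $\omega^{(0)}$ whose $\fn$-part is, by construction, dual to an adapted frame. The set of all adapted Cartan connections is then an affine space. Since the leading soldering data are fixed and only degree-raising corrections are allowed, two adapted connections differ by a one-form whose associated $P$-equivariant function takes values in $\hom(\fn,\fg)_+$, the space of positive-degree maps $\fn\to\fg$. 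Concretely, replacing $\omega$ by $\omega' = \omega + \Phi_\phi$, where $\Phi_\phi$ is the horizontal one-form determined through $\omega^{-1}$ by $\phi\in\hom(\fn,\fg)_+$, exhausts this freedom.

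Second, I would compute how the curvature function transforms under such a gauge change. Inserting $\omega' = \omega + \Phi_\phi$ into $\Omega = \db\omega + \frac{1}{2}[\omega,\omega]$ gives $\Omega' = \Omega + \db\Phi_\phi + [\omega,\Phi_\phi] + \frac{1}{2}[\Phi_\phi,\Phi_\phi]$, and translating into curvature functions via $\kappa(p)(\cdot,\cdot) = \Omega_p(\omega_p^{-1}(\cdot),\omega_p^{-1}(\cdot))$ yields an expression of the schematic form
\begin{equation*}
  \kappa' = \kappa + \p\phi + Q(\phi)\;,
\end{equation*}
where $\p$ is the Lie algebra differential, arising as the degree-preserving part of $\db\Phi_\phi + [\omega,\Phi_\phi]$, and $Q(\phi)$ collects the contributions from $\frac{1}{2}[\Phi_\phi,\Phi_\phi]$ together with the higher-degree remainders of $\db\Phi_\phi + [\omega,\Phi_\phi]$ and of the change in $\omega^{-1}$. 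The essential point, which I would verify using the grading conventions on $\fg$ and its tensor powers recorded earlier, is that $\p$ is homogeneous of degree zero whereas every contribution to $Q(\phi)$ strictly raises degree. Thus a gauge change by a degree-$d$ element $\phi_d$ modifies the curvature in degree $d$ by exactly $\p\phi_d$ and otherwise affects only degrees strictly greater than $d$.

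With this in hand, existence follows by an induction on the degree, which terminates because $\hom(\wedge^2\fn,\fg)_+$ carries only finitely many positive degrees (the degree of $\fn_{-i}\otimes\fn_{-j}^*\wedge\fn_{-k}^*$ is bounded as $i,j,k$ range over $\{1,\dots,m\}$). Suppose $\omega$ has been arranged so that $\kappa$ lies in $\cN$ in every degree below $d$. Using the normal module decomposition $\hom(\wedge^2\fn,\fg)_+ = \cN \oplus \im\p(\hom(\fn,\fg)_+)$, I would split the degree-$d$ component as $\kappa_d = \nu_d + \p\phi_d$ with $\nu_d\in\cN$ and $\phi_d\in\hom(\fn,\fg)_+$ of degree $d$, and then replace $\omega$ by $\omega - \Phi_{\phi_d}$. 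By the degree-raising property of $Q$, this leaves the already-normalised lower degrees untouched and sends the degree-$d$ component into $\cN$. Iterating over the finitely many degrees produces an adapted Cartan connection whose curvature function takes values in $\cN$.

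For uniqueness, suppose $\omega$ and $\omega' = \omega + \Phi_\phi$ are two such connections, and let $d$ be the lowest degree in which $\phi$ is nonzero. Comparing curvatures in degree $d$ gives $\kappa'_d - \kappa_d = \p\phi_d$; since both sides lie in $\cN$ while $\p\phi_d\in\im\p$, the direct-sum decomposition forces $\p\phi_d = 0$. Here the Remark enters decisively: the triviality of the Tanaka prolongation on an equinilpotentisable sub-Riemannian manifold ensures that $\p$ is injective on $\hom(\fn,\fg)_+$, so $\phi_d = 0$, contradicting the minimality of $d$ unless $\phi = 0$; hence $\omega = \omega'$. I expect the main obstacle to be the bookkeeping in the second step, namely pinning down the precise curvature transformation formula and confirming rigorously that $Q(\phi)$ is strictly degree-raising. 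Once that degree estimate is secured, both the inductive construction and the uniqueness argument are formal consequences of the normal module axioms together with the vanishing of the prolongation.
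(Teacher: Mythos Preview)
The paper does not give its own proof of this statement: it simply quotes the result from Morimoto~\cite[Theorem~3.10.1]{morimoto}. Your outline is precisely the standard Tanaka--Morimoto degree-by-degree normalisation argument and is essentially how Morimoto establishes it, so there is nothing to compare against on the paper's side.

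Two small points worth tightening. First, the identification of the affine freedom with $\hom(\fn,\fg)_+$-valued equivariant functions should really be stated as an identification with sections of the associated bundle $\cO_H(\cD)\times_H \hom(\fn,\fg)_+$, so that the inductive correction $\phi_d$ can be chosen $H$-equivariantly at each step; this is where the hypothesis that $\cN$ is an $H$-module, not merely a vector-space complement, is used. Second, your appeal to the triviality of the Tanaka prolongation for the injectivity of $\p$ on $\hom(\fn,\fg)_+$ is correct but slightly compressed: what one actually needs is the vanishing of the positive-degree part of $H^1(\fn,\fg)$, and it is this cohomological statement that Morimoto's condition (and the Remark in the paper, via~\cite{constant_sub}) provides. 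Spelling out that equivalence would remove any doubt about the uniqueness step.
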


The problem that we face is how to choose such a normal module
$\cN$. One of the possibilities is according
to the following construction due to
Morimoto~\cite{morimoto}. In the Introduction, we discuss how the induced
metric $g_{-1}$ on $\fn_{-1}$ defines a metric on all of $\fn$. We
extend it to $\fg$ by assuming that $\fh$ is endowed with a
bi-invariant metric orthogonal to $\fn$, which further
  gives rise to a metric on any tensor product of $\fg$ and its
  subspaces. Thus, we can also
define the duals of linear operators acting on these products. In
particular, we can define the adjoint map $\p^*$, and by the usual
linear algebra arguments $\ker \p^*$ is the orthogonal complement to
$\im \p$. Hence, the subspace $\cN = \ker \p^*$ gives a natural choice for a
$H$-module. However, this module does not always give rise
to the sub-Riemannian diffusion associated with the
sub-Laplacian defined with respect to the Popp volume.

In Section~\ref{sec:cartan}, we construct Cartan connections yielding the
desired sub-Riemannian diffusion by choosing the normal module $\cN$
to be orthogonal to a different module $\mathcal{S}$, which depends on the
structure of the Lie algebra $\fn$.

We end this section by discussing some natural associated bundles
related to equiregular sub-Riemannian manifolds. They are used in
Section~\ref{sec:cartan} to give invariant necessary and sufficient
conditions for the existence of Cartan connections which develop the
canonical sub-Riemannian diffusion process on the nilpotent group $N$ to the
stochastic process with generator $\frac{1}{2}\Delta_\cP$ on $M$.

Given a right principle $H$-bundle $\pi\colon P \to M$ and a
representation $\rho\colon H \to V$ for some vector space $V$, we can
construct the associated bundle $P \times_H V$.
The following proposition,
cf. Sharpe~\cite[Section 1.3]{sharpich}, is needed in the analysis in
Section~\ref{sec:cartan}.

\begin{proposition}
\label{prop:sections_and_functions}
There exists a bijection between sections of $P \times_H V$ and the
space of all equivariant functions, that is, the space
\begin{equation*}
  \left\{f\colon P \to V \mbox{ such that }
  f\in C^\infty(P,V) \mbox{ and }
  f(ph)=\rho\left(h^{-1}\right)f(p) \mbox{ for all }
  p\in P, h\in H\right\}\;.
\end{equation*}
\end{proposition}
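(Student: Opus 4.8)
The plan is to exhibit explicit maps in both directions and verify that they are mutually inverse. Recall that the associated bundle $P\times_H V$ is the quotient of $P\times V$ by the free right $H$-action $(p,v)\cdot h = (ph,\rho(h^{-1})v)$, with points written as equivalence classes $[p,v]$, and that the projection $P\times_H V \to M$ sends $[p,v]$ to $\pi(p)$.

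First I would construct a section from an equivariant function. Given $f$ in the stated space, define $\sigma_f\colon M \to P\times_H V$ by $\sigma_f(q) = [p, f(p)]$ for any $p \in \pi^{-1}(q)$. Independence of the choice of $p$ follows directly from the equivariance of $f$: if $p' = ph$, then $[ph, f(ph)] = [ph, \rho(h^{-1})f(p)] = [p, f(p)]$ by the very definition of the equivalence relation. Since $\pi(p)=q$, the map $\sigma_f$ is indeed a section. Conversely, I would produce an equivariant function from a section. Given a section $\sigma$ and a point $p \in P$ lying over $q=\pi(p)$, the fibre element $\sigma(q)$ has a unique representative with first coordinate $p$; that is, there is a unique $v \in V$ with $\sigma(q) = [p,v]$, uniqueness being a consequence of the freeness of the $H$-action on $P$. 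Setting $f_\sigma(p) = v$ defines a function $P \to V$, and equivariance is immediate: from $\sigma(q) = [p, f_\sigma(p)] = [ph, \rho(h^{-1})f_\sigma(p)]$ one reads off $f_\sigma(ph) = \rho(h^{-1})f_\sigma(p)$.

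The two assignments are mutually inverse by construction: evaluating $f_{\sigma_f}$ at $p$ returns $f(p)$, because $\sigma_f(\pi(p)) = [p, f(p)]$; and $\sigma_{f_\sigma}(q) = [p, f_\sigma(p)] = \sigma(q)$ by the defining property of $f_\sigma$. This establishes the bijection at the level of set-theoretic maps.

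The only point requiring genuine care, and the main obstacle, is matching up the smoothness on both sides, since the correspondence must restrict to a bijection between \emph{smooth} sections and the space of \emph{smooth} equivariant functions. The hard part will be verifying this using local triviality of the principal bundle: over a trivialising open set $U$ with a local section $s\colon U \to P$, one expresses $f_\sigma$ in terms of $s$ together with the local description of $\sigma$, and checks that smoothness of one transfers to the other. This reduces to the smoothness of the $H$-action and of the representation $\rho$, so it presents no essential difficulty beyond careful bookkeeping.
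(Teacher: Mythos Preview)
Your argument is correct and is precisely the standard proof of this classical fact. The paper itself does not give a proof of this proposition at all; it simply states the result and refers the reader to Sharpe~\cite[Section~1.3]{sharpich}, so there is no in-paper proof to compare against. What you have written is exactly the argument one finds in Sharpe (or in any standard treatment of associated bundles), including the remark that smoothness is checked by passing to local trivialisations.
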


For example, let us consider the curvature function $\kappa$.
From the equivariant properties of the Cartan connection $\omega$,
it follows that, cf.~\cite[Lemma 5.3.23]{sharpich},
\begin{equation}
\label{eq:curve_trans}
\kappa(ph)(v_1,v_2) =
\Ad\left(h^{-1}\right)\left(\kappa(p)(\Ad(h)v_1,\Ad(h)v_2)\right)
\mbox{ for all }v_1,v_2 \in \fn\;,
\end{equation}
which implies that the function $\kappa\colon P \to \hom(\wedge^2 \fn,\fg)$ is
actually a section of the associated bundle $P \times_H
\hom(\wedge^2 \fn,\fg)$.
Another example is given by the tangent
bundle $TM$, which according to~\cite[Theorem 5.3.15]{sharpich} is
isomorphic to $P \times_H \fn$. Similarly, as mentioned after
the definition of $\fh$, each $\fn_{-l}$ is an orthogonal
representation of $H$, and therefore, for $l\in\{2,\dots,m\}$,
we can consider the associated
bundle $ P \times_H (\fn_{-1}\oplus...\oplus \fn_{-l})$, which is
isomorphic to $\cD^{-l}$.

\section{Stochastic development in sub-Riemannian
  geometry}
\label{sec:develop}
The goal of this section is to introduce a notion of stochastic
development on sub-Riemannian
manifolds, and to determine the generator of
the stochastic process obtained as
the stochastic
development of a canonical sub-Riemannian diffusion on the associated
nilpotent model. To motivate our definition, we start by
discussing how horizontal differentiable curves on the model space are
developed.

We consider an arbitrary Cartan geometry
$(\cO_H(\cD),\omega)$ with an adapted Cartan connection $\omega$
on an equinilpotentisable sub-Riemannian manifold
$(M,\cD,g)$.
Let $(X_1,\dots,X_n)$ be an adapted frame on $M$ and let $e_i =
\gr(X_i)$, for $i\in\{1,\dots,n\}$, be the elements of the
corresponding adapted basis of the Lie algebra $\fn$. We
denote by $\{e^1,\dots,e^n\}$ the dual basis of $\{e_1,\dots,e_n\}$.
For $h \in H$, we write
$\rho_h$ for the corresponding action of $H$ on, depending on the
context, all of $TM$ or all of $T^*M$, and $h$ for the action on
subspaces isomorphic to $\R^k$, for example, on $\cD^{-1}$ or
$\fn_{-1}$. The Maurer--Cartan form $\omega_G$ on $G$ is given by
\begin{equation*}
  \omega_G = \sum_{i=1}^{\dim G} e_i \otimes e^i\;.
\end{equation*}

Suppose we are given a model horizontal curve
$\gamma_{N}\colon [0,1]\to N$ which we wish to develop to our
sub-Riemannian manifold. Then it must satisfy
\begin{equation*}
\dot \gamma_N = \sum_{i=1}^{k_1}u_i e_i(\gamma_N)
\end{equation*}
for some smooth functions $u_i\colon [0,1]\to\R$. The assumption that
$\gamma_{N}$
is horizontal means that
$\gamma_N^* e^i=0$ for all $e^i \notin \fn^*_{-1}$. Since
$\fg=\fn \oplus \fh$, any lift $\gamma_{G}$ of $\gamma_{N}$ to the 
Lie group $G$ is uniquely defined by its projections $\gamma_N$ and
$\gamma_H$ to $N$ and $H$, respectively, and we write $\gamma_G =
(\gamma_H,\gamma_N)$. Let us choose $\gamma_H \equiv
\id$. This choice, by Theorem~\ref{thm:lift}, does not affect the
development of $\gamma_N$, but it greatly simplifies the subsequent
computations. We obtain that
\begin{equation}
  \label{eq:dev_p}
  \gamma_G^* \omega_G = \sum_{i=1}^{k_1} e_i
  \otimes \left(\gamma_N^* e^i\right)
  = \begin{pmatrix} 
    u_1\\
    \vdots\\
    u_{k_1}\\
    0\\
    \vdots\\
    0
  \end{pmatrix}\dd t\;.
\end{equation}
Note that the $\fh$-valued part of the Maurer--Cartan
  form $\omega_G$ is zero.
Following the scheme of the development of curves discussed
  in Section~\ref{sec:cartan_geom},
we now want to compute $\gamma_{\cO_H(\cD)}^* \omega$ of a Cartan connection
$\omega$ for any curve
$\gamma_{\cO_H(\cD)}\colon [0,1]\to \cO_H(\cD)$ in the adapted orthonormal
frame bundle. Performing the computation in a local trivialisation,
we can write $\gamma_{\cO_H(\cD)} = (h,\gamma_M)$.
For $i\in\{1,\dots,n\}$ fixed, let $j\in\{1,\dots,m\}$ be
  such that $e_i \in \fn_{-j}$. Then the bundle map
$f\colon \fn \to \gr(T_{\pi(f)} M)$ is given by
\begin{equation*}
  f(e_i) = X_i + \sum_{l=1}^{k_1 +\dots +k_{j-1}} b_i^l X_l\;,
\end{equation*}
where the $b_i^l$ are smooth functions on $M$. Hence, we can identify
the bundle map $f$ with a block lower-triangular matrix
all of whose diagonal blocks are identity matrices.
We denote the transpose inverse of this matrix by $F$.

As before, let $(\theta^1,\dots,\theta^n)$ be the dual frame to
$(X_1,\dots,X_n)$, and consider the $\fn$-valued vector form
\begin{equation*}
  \theta = \begin{pmatrix}
    \theta^1\\
    \vdots\\
    \theta^n
\end{pmatrix}.
\end{equation*}
The soldering gauge $\theta_{\fn}$ is then written as
\begin{equation}
\label{eq:soldering_def}
  \theta_{\fn} = F\theta\;.
\end{equation}
We start by analysing the $\fn$-part $\omega_{\fn}$ of the Cartan connection
$\omega$. Due to~\eqref{eq:dev_p} and the defining
relation~\eqref{eq:develop} of the development, we have the following
two equalities
\begin{align}
  \gamma^*_{\cO_H(\cD)} \omega_{\fn} 
  &= \begin{pmatrix}
    u_1 \\
    \vdots \\
    u_{k_1} \\
    0\\
    \vdots \\
    0
  \end{pmatrix}\dd t\; , \label{eq:dev_n}\\
  \gamma^*_{\cO_H(\cD)} \omega_{\fh} &= 0\; \label{eq:dev_h}  
\end{align}
for the pull-backs of the $\fn$-valued part $\omega_\fn$ and of the
$\fh$-valued part $\omega_\fh$, respectively, of the considered Cartan
connection $\omega$.

Let us first derive an explicit description for $\gamma_M$
from the relation~\eqref{eq:dev_n}. We define functions $a_i \in
C^\infty([0,1])$ as
$\gamma_{M}^*\theta^i = a_i \dd t$ for each $i\in \{1,\dots,n\}$. By
explicitly writing down the left hand side of~\eqref{eq:dev_n}, we see
that
\begin{equation*}
  \gamma^*_{\cO_H(\cD)} \omega_{\fn}
  = \rho_h^{-1} \gamma^*_{M} \theta_\fn 
  = \rho_h^{-1}\left(\gamma^*_{M}F\right) \gamma^*_{M}\theta 
  = \rho_h^{-1}\left(\gamma^*_{M}F\right)\begin{pmatrix}
    a_1 \\
    \vdots \\
    a_n
  \end{pmatrix}\dd t\;. 
\end{equation*}
Comparing this with~\eqref{eq:dev_n} yields
\begin{equation}
  \label{eq:1stsys}
\rho_h^{-1} \left(\gamma^*_{M}F\right)
	\begin{pmatrix}
    a_1 \\
    \vdots \\
    a_{k_1} \\
    a_{k_1+1}\\
    \vdots \\
    a_n
  \end{pmatrix}
  = \begin{pmatrix}
    u_1 \\
    \vdots \\
    u_{k_1} \\
    0\\
    \vdots \\
    0
  \end{pmatrix}\;.
\end{equation}
It follows that we can solve the system~\eqref{eq:1stsys} for the
functions $a_i$ for $i\in\{1,\dots,n\}$. Indeed, each $\fn_{-j}$ is an
orthogonal sub-representation of $H$. Thus, each matrix $\rho_h^{-1}$
is block-diagonal. The matrix $F$ is block
upper-triangular unipotent. Hence, we can solve the above system of
equations block by block starting from the lowest rows, which give us
$a_{k_{m}+1} = \dots = a_{n} = 0$. Continuing iteratively, we find
that $a_i = 0$ for all $i\in \{k_{1}+1,\dots,n\}$. Since by
construction the first $k_1\times k_1$ minor of $F$ is the identity
matrix, we finally obtain 
\begin{equation}
\label{eq:random_link}
  a =
  \begin{pmatrix}
    a_1 \\
    \vdots \\
    a_{k_1} 
  \end{pmatrix}
  = h\begin{pmatrix}
    u_1 \\
    \vdots \\
    u_{k_1} 
  \end{pmatrix} 
  = hu\;,
\end{equation}
and, due to $\gamma_{M}^*\theta^i = a_i \dd t$, the curve $\gamma_M$
satisfies
\begin{equation*}
  \dot\gamma_M = \sum_{i=1}^{k_1} a_i X_i(\gamma_M)\;.
\end{equation*}

It remains to determine $h$ from the $\fh$-part $\omega_\fh$ of the
connection $\omega$. Let $\theta_\fh$ be a Cartan gauge of
$\omega_\fh$. If we take a basis $\{A_\alpha\colon 1\leq\alpha\leq \dim H\}$ of
$\fh$ then $\theta_\fh$ can be written as 
\begin{equation}
  \label{eq:thetafh}
  \theta_{\fh} = \sum_{\alpha=1}^{\dim H} A_\alpha \left( 
    (\tilde\Gamma^\alpha)^T \theta_\fn\right), 
\end{equation}
where $\tilde \Gamma^\alpha\colon C^\infty(M) \to \R^n$
  are called Christoffel symbols. Since $\gamma_H\equiv\id$ is constant by
assumption, we deduce from~\eqref{eq:dev_h} that
\begin{equation*}
  h^{-1}\dot h + h^{-1} \left(\gamma_M^*\theta_{\fh}\right) h = 0\;,
\end{equation*}
which, using the change of variables $\tilde{h} = h^{-1}$, simplifies
to
\begin{equation*}
  \dot{\tilde{h}} = \tilde{h} \left(\gamma_M^*\theta_{\fh}\right).
\end{equation*}
Let $\Gamma^\alpha$ denote the reduced vector
\begin{equation*}
  \Gamma^\alpha =
  \begin{pmatrix}
    \Gamma_{1}^\alpha\\
    \vdots\\
    \Gamma_{k_1}^\alpha
  \end{pmatrix}\;,
\end{equation*}
and similarly we define the differential operator $X$ on $M$ with
values in $\cD$ given by
\begin{equation*}
  X =
  \begin{pmatrix}
    X_1\\
    \vdots\\
    X_{k_1}
  \end{pmatrix}.
\end{equation*}
Since $a_i = 0$ for all $i\in \{k_{1}+1,\dots,n\}$, that is,
\begin{equation*}
  \gamma_M^*\theta_{\fn} = \begin{pmatrix}
    a_1 \\
    \vdots \\
    a_{k_1} \\
    0\\
    \vdots \\
    0
  \end{pmatrix}\;,
\end{equation*}
the expression~\eqref{eq:thetafh} simplifies to
\begin{equation*}
  \gamma_M^{*}\theta_{\fh}
  = \sum_{\alpha=1}^{\dim H}
    A_\alpha\left((\tilde \Gamma^\alpha)^T(\gamma_M)
      \left(\gamma_M^*\theta_{\fn}\right)\right)
  = \sum_{\alpha=1}^{\dim H} A_\alpha\left(a^T \Gamma^\alpha(\gamma_M)\right)\dd t\;.
\end{equation*}

Let $\{Y_\alpha\colon 1\leq\alpha\leq \dim H\}$ be the family of 
left-invariant vector fields on $H$ corresponding to the basis
$\{A_\alpha\colon 1\leq\alpha\leq \dim H\}$. In particular, we have
$Y_\alpha(\tilde{h})=\tilde{h} A_\alpha$. Using~\eqref{eq:random_link}
and the fact that $H\subset SO(k_1)$, which gives $a^T=u^T \tilde{h}$,
the
proof of the proposition stated below follows.

\begin{proposition}
\label{prop:later_ref}
Let $(M,\cD,g)$ be an equinilpotentisable sub-Riemannian manifold with
a symmetry group $H$ and a model space $N = G/H$. Let
$(\cO_H(\cD),\omega)$ be a Cartan
geometry on $M$ modelled on $(\fg,\fh)$. Choose an orthonormal basis
$(X_{1},\dots,X_{k_1})$ of $\cD$ and let $e_i = \gr(X_i)$ be the
corresponding basis of $\fn_{-1}$. Then any development $\gamma_M
\colon [0,1]\to M$ of a horizontal curve $\gamma_N \colon [0,1] \to N$
is a projection of a curve $\gamma_{\cO_H(\cD)}\colon [0,1]\to\cO_H(\cD)$
which in the chosen basis satisfies the following system of differential
equations
\begin{align}
  \label{eq:dev1}
  \begin{aligned}
    \dot{\gamma}_M
    &= u^T \tilde{h}X(\gamma_M)\;,\\
    \dot{\tilde h}
    &= \sum_{\alpha =1}^{\dim H}
    \left(u^T \tilde{h} \Gamma^\alpha(\gamma_M)\right)Y_\alpha(\tilde{h})\;,
  \end{aligned}
\end{align}
where $u_i$ are defined by $\gamma^*_N e^i =u_i \dd t$, for
$i\in\{1,\dots,k_1\}$ and $\{e^1,\dots,e^{k_1}\}$ the dual basis of
$\{e_1,\dots,e_{k_1}\}$.
\end{proposition}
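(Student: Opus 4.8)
The plan is to assemble the computation carried out above into the stated ODE system, splitting the analysis according to the decomposition $\fg = \fn \oplus \fh$ of the Cartan connection into its soldering and $\fh$-valued parts.

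First I would fix the lift by setting $\gamma_H \equiv \id$, which is permissible by Theorem~\ref{thm:lift} since the development $\gamma_M$ does not depend on the choice of lift. With this choice the $\fh$-valued part of $\gamma_G^*\omega_G$ vanishes and its $\fn$-valued part is $(u_1,\dots,u_{k_1},0,\dots,0)^T\dd t$, as recorded in~\eqref{eq:dev_p}. Feeding this into the defining relation~\eqref{eq:develop} of the development, and separating components, then splits the problem into the two pullback identities~\eqref{eq:dev_n} and~\eqref{eq:dev_h}.

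Next I would extract the first equation from the $\fn$-part. Writing $\gamma_{\cO_H(\cD)} = (h,\gamma_M)$ in a local trivialisation and using the soldering gauge $\theta_\fn = F\theta$ from~\eqref{eq:soldering_def}, I express $\gamma^*_{\cO_H(\cD)}\omega_\fn = \rho_h^{-1}(\gamma_M^*F)(a_1,\dots,a_n)^T\dd t$, where $\gamma_M^*\theta^i = a_i\dd t$. Comparing with~\eqref{eq:dev_n} produces the linear system~\eqref{eq:1stsys}. The crucial step is to solve this system using the compatible block structures: $\rho_h^{-1}$ is block-diagonal, each $\fn_{-j}$ being an orthogonal sub-representation of $H$, whereas $F$ is block upper-triangular unipotent. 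Solving block by block starting from the bottom rows forces $a_i = 0$ for all $i > k_1$, and the identity block in the top-left $k_1\times k_1$ corner yields $a = hu$, which is~\eqref{eq:random_link}. Recalling $\gamma_M^*\theta^i = a_i\dd t$ then gives $\dot\gamma_M = \sum_{i=1}^{k_1}a_i X_i(\gamma_M)$, which is the first equation of~\eqref{eq:dev1} once we rewrite $a = hu$ as $a^T = u^T\tilde h$ using $\tilde h = h^{-1} = h^T$.

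Finally I would extract the second equation from the $\fh$-part. Expanding the Cartan gauge as $\theta_\fh = \sum_\alpha A_\alpha((\tilde\Gamma^\alpha)^T\theta_\fn)$ and inserting~\eqref{eq:dev_h} together with $\gamma_H\equiv\id$ into the gauge transformation rule gives $h^{-1}\dot h + h^{-1}(\gamma_M^*\theta_\fh)h = 0$, which in the variable $\tilde h = h^{-1}$ becomes $\dot{\tilde h} = \tilde h(\gamma_M^*\theta_\fh)$. Because $a_i = 0$ for $i > k_1$, the pullback $\gamma_M^*\theta_\fh$ collapses to $\sum_\alpha A_\alpha(a^T\Gamma^\alpha(\gamma_M))\dd t$. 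Using the left-invariant vector fields $Y_\alpha$ with $Y_\alpha(\tilde h) = \tilde h A_\alpha$ and the relation $a^T = u^T\tilde h$ (from $a = hu$ and $H \subset SO(k_1)$) then yields the second equation of~\eqref{eq:dev1}. I expect the main obstacle to be the careful block-by-block solution of~\eqref{eq:1stsys}: one must track the grading to confirm that the unipotent structure of $F$ together with the block-diagonality of $\rho_h^{-1}$ propagates the vanishing of the lower-degree components all the way up, leaving only the degree-one relation $a = hu$, on which both equations ultimately rest.
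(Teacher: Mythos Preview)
Your proposal is correct and follows essentially the same argument as the paper: fix the lift $\gamma_H\equiv\id$ via Theorem~\ref{thm:lift}, split the development relation~\eqref{eq:develop} into its $\fn$- and $\fh$-parts, solve the block-triangular system~\eqref{eq:1stsys} to obtain $a = hu$ and the first equation, and then use the gauge transformation together with $\tilde h = h^{-1}$ and $H\subset SO(k_1)$ to obtain the second equation. There is no substantive difference in method or in the order of the steps.
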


\begin{remark}
We recall that Theorem~\ref{thm:lift} says that the development
$\gamma_M$ only depends on a choice of initial point
$\gamma_M(0) \in M$.
In particular, once $\gamma_M(0)$ has been chosen, the solution of
the smooth system~\eqref{eq:dev1} of ordinary differential equations
always projects to the same curve on $M$ irrespective of the initial
condition for $h$.
\end{remark}

This motivates the definition of stochastic
development we provide below for which we need one last ingredient taking the
place of the horizontal curve $\gamma_N$ we develop in the
deterministic setting. Any semimartingale $(w_t)_{t\geq 0}$ on
$\R^{k_1}$ lifts uniquely to a semimartingale $(\tilde w_t)_{t\geq 0}$
on the Carnot group $N$. In particular, the lift of Brownian motion
$(b_t)_{t\geq 0}$ on $\R^{k_1}$ is the stochastic process
$(\tilde b_t)_{t\geq 0}$ on $N$ whose generator $\frac{1}{2}\Delta$ is
given by
\begin{equation*}
  \Delta=\sum_{i=1}^{k_1} V^2_i\;,
\end{equation*}
where $V_i$ is the left-invariant vector field on $N$ corresponding to
$e_i$ for $i\in\{1,\dots,k_1\}$. As the operator $\Delta$ on the
nilpotent Lie group $N$ is the sub-Laplacian with respect to the Popp
volume, see Vigneron~\cite{vigneron}, and since in this setting the
Popp volume further coincides with the right Haar measure, the left
Haar measure and the Lebesgue measure, the lift
$(\tilde b_t)_{t\geq 0}$
on $N$ can be considered as a canonical sub-Riemannian diffusion
on $N$. 

By formally replacing time derivatives with Stratonovich differentials
and the control $u$ by the differential of the driving stochastic
process, we obtain the following definition.

\begin{definition}\label{def:stochastic}
  Let $(\tilde w_t)_{t\geq 0}$ be a semimartingale on $N$ which is the
  lift of $(w_t)_{t\geq 0}$ on $\R^{k_1}$. Then the
  stochastic development of $(\tilde w_t)_{t\geq 0}$
  is the stochastic process on the manifold $M$ which arises as
  the projection to $M$ of the unique solution to the system of
  Stratonovich stochastic differential equations, written in a local
  trivialisation,
  \begin{align*}
    \p \gamma_M &= \p w^T \tilde h X(\gamma_M)\;,\\
    \p \tilde h &= \sum_{\alpha=1 }^{\dim H}\left(\p w^T
                   \tilde{h}\Gamma^\alpha(\gamma_M)\right)
                  Y_{\alpha}(\tilde{h})\;,
  \end{align*}
  subject to a choice of initial condition.
\end{definition}

Some remarks from the stochastic analysis side are needed at this point.
The definition above does not only use the notion of stochastic
differential equations, see e.g. {\O}ksendal~\cite{oksendal} and
Rogers, Williams~\cite{rw1,rw2}, but also relies on the extension of
Stratonovich differentials to manifolds, cf. Norris~\cite{norris92}. To
avoid dwelling too deeply into the theory of stochastic calculus, we
simply provide a brief overview. A semimartingale is the right kind of
stochastic
process needed when wanting to consider a stochastic differential of
such a process. While the Stratonovich differential is not the only
stochastic differential available in stochastic calculus, it is the
one which, unlike the It\^o differential, is invariant under
coordinate transformations and is therefore more suited to differential
geometry. Moreover, as discussed in~\cite{part3essay,norris92}, it is
also important to note that Stratonovich differentials are only symbolic
and need to be understood as part of an integral equation. As a result
of this and due to the rotational invariance of Brownian motion,
even though Definition~\ref{def:stochastic} is stated in a
local trivialisation, the stochastic development of the
canonical sub-Riemannian diffusion $(\tilde b_t)_{t\geq 0}$
does not depend on this choice.

While we are mainly interested in the geometric and algebraic picture,
we need the relation from stochastic analysis that, for sufficiently nice
vector fields $Z_1,\dots,Z_k$ on $\R^N$ and Brownian motion $(b_t)_{t\geq 0}$
on $\R^k$, the unique solution $(z_t)_{t\geq 0}$ in $\R^N$
to the Stratonovich stochastic differential equation
\begin{equation*}
  \p z_t = \sum_{i=1}^k Z_i(z_t) \st b_t^i
\end{equation*}
is the stochastic process whose generator is the sum of
squares operator $\frac{1}{2}\sum_{i=1}^k Z^2_i$.

For completeness, we remark that in deriving the
formula~\eqref{eq:main_formula} given in Theorem~\ref{thm:main_formula}, we
extensively use the fact that the symmetry group $H$ is a subgroup of
the orthogonal group. Without this feature the resulting stochastic
process would not project well to the base manifold. For example, in
the Lorentzian setting the stochastic process is indeed studied as a
stochastic process on the pseudo-orthonormal frame
bundle, see Franchi and Le Jan~\cite{lorentz}. 

We are now ready to prove Theorem~\ref{thm:main_formula} stated in the
Introduction.

\begin{proof}[Proof of Theorem~\ref{thm:main_formula}]
We rewrite the system of Stratonovich stochastic differential
equations from Definition~\ref{def:stochastic} for the development of
the canonical sub-Riemannian diffusion $(\tilde b_t)_{t\geq 0}$ as
\begin{equation*}
  \p \gamma_{\cO_H(\cD)} = \p b^T Z\left(\gamma_{\cO_H(\cD)}\right)\;,
\end{equation*}
where in a local trivialisation we write
$\gamma_{\cO_H(\cD)}=(h,\gamma_M)$ and, with $X_1,\dots,X_{k_1}$ and the
$Y_\alpha$ for $1\leq\alpha\leq \dim H$ understood as vector fields on
$\cO_H(\cD)$,
\begin{equation*}
  Z = \tilde{h}X + \sum_{\alpha=1}^{\dim H}
  \left(\tilde{h}\Gamma^\alpha\right) Y_\alpha\;.
\end{equation*}
The generator of the stochastic process on $\cO_H(\cD)$ is then given by a
sum of squares operator, which in our notation can be compactly
written as 
\begin{equation*}
  \frac{1}{2}\Delta_{\cO_H(\cD)}=\frac{1}{2}Z^T Z\;.
\end{equation*}
Thus, the generator $\frac{1}{2}\Delta$ of the stochastic development
of the canonical sub-Riemannian diffusion is
given, for a function $f\in C^\infty(M)$, by
\begin{align*}
  \Delta(f)
   = \Delta_{\cO_H(\cD)}\left(\pi^* f\right)
   = Z^T Z\left(\pi^* f\right)
  & = X^T \tilde{h^T}\tilde{h}X(f) +
    \sum_{\alpha = 1}^{\dim H}(\Gamma^\alpha)^T \tilde{h}^T
    Y_\alpha(\tilde{h}) X(f)\\
  &= X^T X(f) +  \sum_{\alpha = 1}^{\dim H}
    (\Gamma^{\alpha})^T A_\alpha X(f)\;,
\end{align*}
where we used $Y_\alpha(\tilde{h}) = \tilde{h}A_\alpha$ and the
orthogonality of $\tilde{h}$.
\end{proof}

As a straightforward consequence of Theorem~\ref{thm:main_formula}, we
recover the Riemannian case. For a Riemannian manifold of dimension
$n$, we have $N=\R^n$ and $H=SO(n)$. In particular, the
  basis elements of $\mathfrak{so(n)}$ are the skew-symmetric matrices
$A_{ij}= E_{ij}-E_{ji}$, for $i,j\in\{1,\dots,n\}$, where $E_{ij}$ is
the matrix whose only non-vanishing element is the $(i,j)^{\rm th}$
entry, which is equal to one.
We denote the corresponding vectors of Christoffel symbols by
\begin{equation*}
  \Gamma^i_j = \begin{pmatrix}
    \Gamma^i_{1j}\\
    \vdots\\
    \Gamma^i_{nj}
  \end{pmatrix}\;.
\end{equation*}
We compute
\begin{align*}
  \Delta
  &= \sum_{i=1}^n X_i^2 +
    \sum_{1\leq j<k\leq n}
    \left( \Gamma^j_k\right)^T\left(E_{jk}-E_{kj}\right) X =\\
  &= \sum_{i=1}^n X_i^2 +
    \sum_{1\leq j<k\leq n} \left(\Gamma^j_{jk}X_k -\Gamma^j_{kk}X_j\right)
    = \sum_{i=1}^n X_i^2 -
    \sum_{1\leq j<k\leq n} \left(\Gamma^k_{jj}X_k +\Gamma^j_{kk}X_j\right)  = \\
  &=\sum_{i=1}^n X_i^2 -
    \sum_{1\leq j<k\leq n} \Gamma^k_{jj}X_k - \sum_{1\leq k<j\leq n} \Gamma^k_{jj}X_k = 
    \sum_{i=1}^n X_i^2 -\sum_{j\neq k} \Gamma^k_{jj}X_k = \\
  &= \sum_{i=1}^n X_i^2 -\sum_{j, k=1}^n \Gamma^k_{jj}X_k\;,
\end{align*}
where we used $\Gamma^i_{jk}=-\Gamma^k_{ji}$ in the second row, which
is a consequence of the skew-symmetry of the matrices $A_{ij}$.
On the other hand, we have
\begin{equation*}
  \divv(X_i) = \sum_{j=1}^n g(\nabla_{X_j} X_i,X_j)
  = \sum_{j=1}^n\Gamma_{ji}^j\;,
\end{equation*}
which, by skew-symmetry, yields
\begin{equation*}
  \Delta =
  \sum_{i=1}^n X_i^2 + \sum_{i=1}^n \divv(X_i)X_i
  = \sum_{i=1}^n \left( X_i^2 - \sum_{j=1}^n \Gamma^i_{jj}X_i\right)\;,
\end{equation*}
agreeing with the expression obtained above with the Cartan geometry approach.
This formula holds for any metric connection. If we now use the
Levi--Civita connection, whose Christoffel symbols can be computed in
terms of the structure constants of an orthonormal frame as
\begin{equation*}
  \Gamma^i_{jk}= \frac{1}{2}\left(c^k_{ij}-c^i_{jk}+c^j_{ki} \right)\;,
\end{equation*}
we exactly recover~\eqref{eq:sr_laplacian}. 

\section{Cartan connections for
sub-Riemannian diffusions}
\label{sec:cartan}
We prove Theorem~\ref{thm:develop} by explicitly constructing a
Cartan connection for which the stochastic development of the
canonical sub-Riemannian diffusion on the nilpotent model has
generator $\frac{1}{2}\Delta_\cP$ for $\Delta_\cP$ the
sub-Laplacian defined with respect to the Popp volume $\cP$. We
further illustrate the construction for manifolds modelled by free
nilpotent structures with two generators.

For a given Cartan geometry $(\cO_H(\cD),\omega)$ with an adapted Cartan
connection $\omega$ on an equinilpotentisable sub-Riemannian manifold
$(M,\cD,g)$, let $\Delta$ be twice the generator of the
stochastic development using the Cartan connection
$\omega$ of the canonical sub-Riemannian diffusion on the
nilpotent model $N$.
The first goal is to give an invariant
description of $\Delta - \Delta_\cP$.
Since the second order partial differential operators $\Delta$ and
$\Delta_\cP$ have the same second order term, the difference $\Delta -
\Delta_\cP$ can be understood as a vector field, which, as we see
below, is in fact
a horizontal vector field on $(M,\cD,g)$. We start by providing an
invariant description of this object, followed by giving an expression
in local coordinates.

Consider the curvature function
$\kappa\colon \cO_H(\cD) \to \hom(\wedge^2\fn,\fg)$ of the
Cartan connection $\omega$ and let $\kappa_\fn$ be its
$\hom(\wedge^2 \fn,\fn)$-valued part. As discussed in
Subsection~\ref{sec:scartan}, the curvature function $\kappa$ is
equivariant, and thus so is $\kappa_\fn$ with the same
law~\eqref{eq:curve_trans} of transformation. Note that due to the
semi-direct product structure of $G$, the adjoint action $\Ad$ of $H$
on $\fn$ coincides with the usual action of $H$. This implies that the
adjoint action of $H$ on $\hom(\wedge^2 \fn,\fn)$ coincides with the
standard action of $H$ on the isomorphic space
$\fn \otimes \fn^* \wedge \fn^*$. Hence, $\kappa_\fn$ is a section of
the associated bundle
$\cO_H(\cD) \times_H (\fn \otimes \fn^* \wedge \fn^*)$.

Let $R\colon \fn \otimes \fn^* \wedge \fn^* \to \fn_{-1} $ be the
map that is defined as follows as a composition of maps
\begin{equation}
\label{eq:def_r}
R\colon \fn \otimes \fn^* \wedge \fn^* \xrightarrow{\tr} \fn^*
\xrightarrow{\flat} \fn \xrightarrow{\pi_{-1}} \fn_{-1}\;.
\end{equation}
Here, $\tr$ is the contraction map, $\flat$ is the operation of
lowering of indices, and $\pi_{-1}$ denotes the orthogonal projection
to $\fn_{-1}$.
By Proposition~\ref{prop:sections_and_functions}, we can associate
with $\kappa_\fn$ an equivariant function $K_\fn$ with values in $\fn
\otimes \fn^* \wedge \fn^*$.
It follows that $R \circ K_\fn$ is a function with values in
$\fn_{-1}$ which is also equivariant since $H$ preserves the metric
on $\fn$ and $\fn_{-1}$ is an orthogonal sub-representation.
In particular, it is possible to associate with $R \circ K_\fn$ a
section of $\cO_H(\cD) \times_H \fn_{-1} \simeq \cD$, which we denote by
$R \circ \kappa_\fn$.

The object $R \circ \kappa_\fn$ has a simple description in a local
trivialisation. Let $\{e_1,\dots,e_n\}$ be an adapted orthonormal basis
of $\fn$, let $\{e^1,\dots,e^n\}$ be the dual basis of $\fn^*$ and let
$(X_1,\dots,X_{k_1})$ be the corresponding orthonormal frame of
$\cD$. In this trivialisation, we can write
\begin{equation*}
  \kappa_\fn = \sum_{j,k,l=1}^{n}\Omega^l_{jk} e_l \otimes e^j \wedge e^k\;.
\end{equation*}
Taking the trace, lowering the indices and using the isomorphism
between $\cD$ and $\fn_{-1}$, we obtain
\begin{equation}\label{eq:Rkappa}
  R \circ \kappa_\fn =
  \sum_{i=1}^{k_1} \sum_{j=1}^n\Omega^j_{ji} X_i\;.
\end{equation}
\begin{proposition}\label{prop:diff}
  Suppose $(M,\cD,g)$ is an equinilpotentisable sub-Riemannian
  manifold with a symmetry group $H$ and a model space $N = G/H$
  that admits a Cartan geometry $(\cO_H(\cD),\omega)$ modelled on
  $(\fg,\fh)$. Then, we have
  \begin{equation*}
    \Delta - \Delta_\cP = R \circ \kappa_\fn\;.
  \end{equation*}
\end{proposition}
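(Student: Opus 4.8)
The plan is to compute both sides in a single adapted local frame and match them term by term. Since $\Delta$, $\Delta_\cP$ and $R\circ\kappa_\fn$ are all coordinate-invariant objects, I am free to work in the most convenient trivialisation. Because $\omega$ is adapted, the soldering gauge $\theta_\fn = s^*\omega_\fn$ is dual to an adapted frame; I take $(X_1,\dots,X_n)$ to be precisely this frame, so that $\theta_\fn^i = \theta^i$ with no lower-triangular correction, i.e. $F=\id$. In this frame Theorem~\ref{thm:main_formula} gives $\Delta = \sum_{i=1}^{k_1}X_i^2 + \sum_\alpha (\Gamma^\alpha)^T A_\alpha X$, while formula~\eqref{eq:sr_laplacian} gives $\Delta_\cP = \sum_{i=1}^{k_1}\left(X_i^2 - \sum_l c^l_{il}X_i\right)$. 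Subtracting, the second-order terms cancel and
\[
\Delta - \Delta_\cP = \sum_{\alpha=1}^{\dim H}(\Gamma^\alpha)^T A_\alpha X + \sum_{i=1}^{k_1}\sum_{l=1}^n c^l_{il}X_i\;,
\]
which is manifestly a horizontal vector field, as asserted in the discussion preceding the statement. The remaining task is to show that the right-hand side equals the local expression~\eqref{eq:Rkappa} for $R\circ\kappa_\fn$.

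To that end I would compute the $\fn$-part of the curvature in the same frame. Writing $\omega = \omega_\fn + \omega_\fh$ and using the semidirect product structure of $\fg=\fn\oplus\fh$ together with the symmetry~\eqref{eq:commute}, only $\frac12[\omega_\fn,\omega_\fn]$ and $[\omega_\fh,\omega_\fn]$ contribute to the $\fn$-valued part of $\frac12[\omega,\omega]$, so that the pulled-back curvature reads $s^*\Omega_\fn = \db\theta_\fn + \frac12[\theta_\fn,\theta_\fn] + [\theta_\fh,\theta_\fn]$. Evaluating the components $\Omega^l_{jk} = (s^*\Omega_\fn)^l(X_j,X_k)$ on the frame and using~\eqref{eq:formula_com}, they split into three pieces: the exterior derivative contributes $-c^l_{jk}$, where $c$ are the structure constants of the adapted frame (via $\db\theta^l(X_j,X_k)=-c^l_{jk}$); the bracket $\frac12[\theta_\fn,\theta_\fn]$ contributes the structure constants $\hat c^l_{jk}$ of the graded algebra $\fn$, where $[e_j,e_k]=\sum_l\hat c^l_{jk}e_l$; and $[\theta_\fh,\theta_\fn]$, using $\theta_\fh = \sum_\alpha A_\alpha\otimes\sum_i\tilde\Gamma^\alpha_i\theta^i$ from~\eqref{eq:thetafh}, contributes the Christoffel part $\tilde\Gamma^\alpha_j (A_\alpha)^l_k$ antisymmetrised in $j,k$, where $A_\alpha(e_k)=\sum_l (A_\alpha)^l_k e_l$.

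I would then apply $R$, which by~\eqref{eq:Rkappa} amounts to forming $\sum_{i=1}^{k_1}\bigl(\sum_{j=1}^n\Omega^j_{ji}\bigr)X_i$, and treat the three pieces separately. The frame term gives $-\sum_j c^j_{ji} = \sum_j c^j_{ij} = \sum_l c^l_{il}$ by antisymmetry of the structure constants in their lower indices, reproducing exactly the divergence term of $\Delta-\Delta_\cP$. The $\fn$-term vanishes identically: since $e_i\in\fn_{-1}$ the operator $\ad_{e_i}$ strictly lowers the grading, hence $\hat c^j_{ji}=0$ for every $j$ (equivalently, $\ad_{e_i}$ is nilpotent and so trace-free). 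Finally, in the Christoffel term $\sum_\alpha\sum_j\bigl(\tilde\Gamma^\alpha_j (A_\alpha)^j_i - \tilde\Gamma^\alpha_i (A_\alpha)^j_j\bigr)$ the second summand drops because each $A_\alpha\in\fh\subset\mathfrak{so}(\fn_{-1})$ is skew-symmetric, hence $\sum_j(A_\alpha)^j_j=\tr A_\alpha=0$; in the first summand $(A_\alpha)^j_i$ vanishes unless $j\le k_1$ (as $A_\alpha$ preserves the grading and $e_i\in\fn_{-1}$), so it collapses to $\sum_\alpha\sum_{j\le k_1}\Gamma^\alpha_j(A_\alpha)^j_i$, which is exactly the $X_i$-coefficient of $\sum_\alpha(\Gamma^\alpha)^T A_\alpha X$. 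Matching the three contributions with the two terms of $\Delta-\Delta_\cP$ computed above completes the proof.

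The main obstacle is bookkeeping rather than conceptual: one must fix the wedge and antisymmetrisation conventions so that the single trace $\sum_j\Omega^j_{ji}$ correctly extracts each contribution, and one must invoke the two independent vanishing facts --- nilpotency of $\fn$, which kills the graded-algebra term, and skew-symmetry of $\fh$, which kills the diagonal of $A_\alpha$ --- at the right places. These two cancellations are precisely what makes the trace of the curvature collapse onto the divergence term of the Popp sub-Laplacian together with the first-order Christoffel term produced by the stochastic development.
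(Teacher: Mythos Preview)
Your proof is correct and follows the same overall strategy as the paper---compute $\sum_j\Omega^j_{ji}$ in a local trivialisation and match it against the first-order terms of $\Delta$ and $\Delta_\cP$---but you make one genuine simplification. By invoking the adaptedness of $\omega$ you take the adapted frame to be exactly the one dual to $\theta_\fn=s^*\omega_\fn$, forcing $F=\id$. The paper instead works with an arbitrary adapted frame, so that $\theta_\fn=F\theta$ with a nontrivial block upper-triangular unipotent $F$, and then spends several lines showing that every $F$-dependent contribution to $(\db\theta_\fn)^j_{ji}$ cancels. Your gauge choice eliminates that computation entirely; what it costs is the brief justification (which you give) that such a frame exists.

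Two small points of bookkeeping are worth tightening. First, your dismissal of $\sum_j(A_\alpha)^j_j$ appeals to $\fh\subset\mathfrak{so}(\fn_{-1})$, but the trace here runs over all of $\fn$; you need that $A_\alpha$ is skew with respect to the full induced metric on $\fn$, which the paper establishes in Section~\ref{sec:scartan} (since $H$ preserves the metric on each $\fn_{-l}$). Second, your cancellation $\hat c^j_{ji}=0$ via ``$\ad_{e_i}$ strictly lowers the grading'' is exactly the paper's observation that $[\theta_\fn,\theta_\fn]$ has degree zero while the relevant curvature components have degree one---same fact, different phrasing. With these clarifications your argument is complete and somewhat cleaner than the original.
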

\begin{proof}
  Throughout, we use summation convention over repeated indices
  for $i\in \{1,\dots,k_1\}$ and all
  other lowercase Latin indices ranging from $1$ to $n$, and
  for $\alpha \in \{n+1,\dots,n+\dim H\}$.
  The computations are performed in an arbitrary trivialisation.
  As before, let $(X_1,\dots,X_n)$ be the adapted orthonormal frame
  of $TM$ corresponding to $\{e_1,\dots,e_n\}$, let
  $\{e_{n+1},\dots,e_{n+\dim H}\}$ be a basis of $\fh$ and let
  $(\theta^1,\dots,\theta^n)$ be the coframe dual to
  $(X_1,\dots,X_n)$.

  From the formula for the curvature two-form $\Omega$, we see that
  \begin{equation}\label{eq:Omega}
    \Omega^j_{ji} =
    \left(\db \theta_\fn\right)_{ji}^j +
    \frac{1}{2}[\theta,\theta]_{ji}^j\;.
  \end{equation}
  Let us first consider the second term. We observe that
  $[\theta_\fh,\theta_\fh]$ takes values in $\fh$ and that
  $[\theta_\fn,\theta_\fn]$ consists of elements that have degree
  zero. Since the terms $\Omega^j_{ji}( e_j \otimes \theta^j_\fn
  \wedge \theta^i_\fn)$ have degree one, it follows that only the mixed
  commutators
  between $\theta_\fh$ and $\theta_\fn$ contribute to the
  expressions.
  By using~\eqref{eq:commute} and \eqref{eq:formula_com} as well as
  \begin{equation*}
    \theta_\fh=
    e_\alpha\otimes \theta^\alpha_{\fh} =
    e_\alpha\otimes \Gamma^\alpha_j \theta^j_\fn\;,
  \end{equation*}
  we obtain, for $A_\alpha$ the matrix of the action
  of $e_\alpha$ on $\fn$, that
  \begin{align}\label{eq:input1}
    \begin{aligned}
      \frac{1}{2}[\theta,\theta]^j_{ji}
      &=
      \frac{1}{2}[\theta_\fh,\theta_\fn]^j_{ji}+
      \frac{1}{2}[\theta_\fn,\theta_\fh]^j_{ji}
      =
      [\theta_\fh,\theta_\fn]^j_{ji} = \\ 
      &= \left[e_\alpha \otimes\Gamma^\alpha_k \theta^k_\fn ,e_l \otimes
        \theta^l_\fn\right]^j_{ji}
      = \left(\left[e_\alpha \Gamma^\alpha_k,e_l\right]
        \otimes \theta^k_\fn \wedge \theta^l_\fn\right)^j_{ji}=
      \Gamma^\alpha_j [e_\alpha,e_i]^j = \Gamma^\alpha_j(A_\alpha)^j_i\;.
    \end{aligned}
  \end{align}
  To deal with the first term, we recall that the
  soldering gauge $\theta_\fn$ is given by~\eqref{eq:soldering_def}.
  The important thing to remember about $F$ is that it is an
  upper-block triangular matrix whose diagonal blocks are identity matrices.
  Let $f^j_k$ and $\tilde{f}^j_k$ be
  the components of the matrices $F$ and $F^{-1}$, respectively.
  Then we can rewrite~\eqref{eq:soldering_def} as
  \begin{equation*}
    \theta^j_\fn = f^j_k \theta^k\;.
  \end{equation*}
  Thus, we have
  \begin{align*}
    \db \theta^j_\fn
    &= \db f^j_k \wedge \theta^k +  f^j_k \dd \theta^k
     = X_l\left(f^j_k\right)\theta^l\wedge \theta^k - \frac{1}{2}f^j_k
      c^k_{ls}\theta^l \wedge\theta^s = \\
    &= \left(\tilde f^l_p \tilde f^k_q X_l\left(f^j_k\right) -
      \frac{1}{2}f^j_k \tilde
      f^l_p \tilde f^s_q c^k_{ls} \right)\theta^p_\fn \wedge\theta^q_\fn
  \end{align*}
  and we deduce that
  \begin{equation*}
    \left(\db \theta_\fn\right)_{ji}^j
    = \tilde f^l_j \tilde f^k_i X_l\left(f^j_k\right) -
    \tilde f^l_i \tilde f^k_j X_l\left(f^j_k\right) - \frac{1}{2}f^j_k \tilde
    f^l_j \tilde f^s_i c^k_{ls} + \frac{1}{2}f^j_k \tilde f^l_i \tilde
    f^s_j c^k_{ls}\;.
  \end{equation*}
  The expression on the right hand side simplifies significantly
  due to $F$ and $F^{-1}$ being block upper-triangular
  matrices with identity matrices as diagonal blocks.
  For the first term, we use that the elements of the $i^{\rm th}$
  column of $F$ and $F^{-1}$ satisfy $f^k_i=\tilde{f}^k_i=\delta^k_i$
  to deduce that
  \begin{equation*}
    \tilde f^l_j \tilde f^k_i X_l\left(f^j_k\right)
    =\tilde f^l_j \delta^k_i X_l\left(f^j_k\right)
    =\tilde f^l_j X_l\left(f^j_i\right)
    =\tilde f^l_j X_l\left(\delta^j_i\right)=0\;,
  \end{equation*}
  whereas for the second term, the property $f^j_k=\tilde{f}^j_k=0$ for $k<j$
  yields
  \begin{equation*}
    \tilde f^l_i \tilde f^k_j X_l\left(f^j_k\right)
    =\tilde f^l_i X_l\left(f^j_j\right)
    =\tilde f^l_i X_l\left(n\right)=0\;.
  \end{equation*}
  For the last two terms, we exploit
  $f^j_k \tilde f^l_j=\delta^l_k$ and the antisymmetry of the
  structure constants to obtain that
  \begin{equation*}
    \left(\db \theta_\fn\right)_{ji}^j
    = - \frac{1}{2}f^j_k \tilde
    f^l_j \tilde f^s_i c^k_{ls} + \frac{1}{2}f^j_k \tilde f^l_i \tilde
    f^s_j c^k_{ls}
    = - \frac{1}{2}\tilde f^s_i c^l_{ls}
    +\frac{1}{2}\tilde f^l_i c^s_{ls}=-\tilde f^s_i c^l_{ls}\;.
  \end{equation*}
  Using once more that $\tilde{f}^k_i=\delta^k_i$ as observed
  above, we find
  \begin{equation}\label{eq:input2}
    \left(\db \theta_\fn\right)_{ji}^j = - c^l_{li}\;.
  \end{equation}
  Inserting~\eqref{eq:input1} and \eqref{eq:input2} into
  \eqref{eq:Omega} gives
  \begin{equation*}
    \Omega^j_{ji}=\Gamma^\alpha_j(A_\alpha)^j_i- c^l_{li}\;,
  \end{equation*}
  and the claimed result follows from Theorem~\ref{thm:main_formula}
  as well as formulas~\eqref{eq:sr_laplacian} and \eqref{eq:Rkappa}.
\end{proof}

We are now ready to prove Theorem~\ref{thm:develop}.

\begin{proof}[Proof of Theorem~~\ref{thm:develop}]
  Let us first prove the necessary part for the
  existence. The soldering form $\omega_\fn$ gives an isomorphism between
  $\cD$ and
  $\fn_{-1}$. If $H\subset SO(k_1)$ has a one-dimensional representation,
  then there exists $v\in \fn_{-1}$ such that $Hv = \pm v$, and
  consequently $Av = 0$ for all $A\in \fh$. Take $X\in \Gamma(\cD)$ such
  that $\omega_\fn(X) = v$ and complete it to an orthonormal adapted frame.
  Then $\omega_\fn^{-1}(v) = X$, $AX = 0$ and the vector field $X$
  does not
  appear in the divergence part of the
  formula~\eqref{eq:main_formula}.

  It remains to prove that this condition is also sufficient.
  We fix an orthonormal adapted frame $\{e_1,\dots,e_{n+\dim H}\}$ in
  $\fg$ where the first $n$ elements form a basis of
  $\fn$ and the last $\dim H$ elements a basis of $\fh$.
  As before, we assume that $\{e_1,\dots,e_{k_1}\}$ forms an
  orthonormal basis of the space $\fn_{-1}$. Let
  \begin{equation*}
    \ker \fh=\{v\in \fn_{-1}\mbox{ such that }
    Av=0\mbox{ for all }A\in\fh
    \}.
  \end{equation*}
  We can suppose that
  $\ker \fh = \operatorname{span}\{e_{{k_0} +
  1},\dots,e_{k_1}\}$, for some $k_0\in\{0,\dots,k_1\}$,
  and that all vector fields corresponding to $\ker \fh$ are
  divergence-free.
  In the following, we abuse notation and we use $g$ to refer to the
  extended metric on $\fg$ with $g_{ij}$ and $g^{ij}$, for
  $i,j\in\{1,\dots, n+\dim H\}$, denoting the components of $g$ and
  of the corresponding metric on the dual space $\fg^*$, respectively.

  Since the spaces $\fn_{-l}$ are pairwise orthogonal and using the
  orthonormality of $\{e_1,\dots,e_{k_1}\}$, we obtain, for
  $i\in\{1,\dots,k_1\}$ and $k,l,s \in \{1,\dots,n\}$,
  \begin{equation}\label{eq:metric}
    g\left(\sum_{j=1}^n e_j \otimes e^j \wedge e^i, e_k \otimes e^l
      \wedge e^s \right)= \frac12\sum_{j=1}^n
    g_{jk}\left(g^{jl}g^{is}-g^{js}g^{il}\right) =
    \frac12(\delta^l_k\delta^{is}-\delta^s_k\delta^{il})\;.
  \end{equation}
  It follows that, for $k\not= i$,
  $$
  g\left(\sum_{j=1}^n e_j \otimes e^j \wedge e^i, e_k \otimes e^k
    \wedge e^i \right) = - g\left(\sum_{j=1}^n e_j \otimes e^j
    \wedge e^i, e_k \otimes e^i \wedge e^k \right) = \frac12\;.
  $$
  and that the scalar product of $\sum_{j=1}^n e_j \otimes e^j \wedge e^i$
  with any other element of $\hom(\wedge^2
  \fn,\fg)_+$ is zero. Therefore, we obtain
  \begin{equation*}
    g\left(\sum_{j=1}^n e_j \otimes e^j \wedge e^i, \kappa \right) = \frac12
    \sum_{j=1}^n \Omega^j_{ji}\;.
  \end{equation*}
  From $\ker \fh = \operatorname{span}\{e_{{k_0} +
  1},\dots,e_{k_1}\}$ and the divergence-free assumption, we obtain
  $\sum_{j=1}^n\Omega^j_{ji}=0$ for all $i\in \{k_0+1,\dots,k_1\}$.
  Thus, if we consider the adapted Cartan connection associated with a
  normal module $\cN$ which lies in the orthogonal complement
  of the module
  \begin{equation*}
    \cS=\spann\left\{\sum_{j=1}^n e_j \otimes e^j \wedge e^i \text{ for }
      1\leq i \leq k_0\right\}
  \end{equation*}
  then, due to the curvature function $\kappa$ taking values in $\cN$,
  we have
  \begin{equation}
  	\label{eq:vanishing_curv}
    \sum_{j=1}^n \Omega^j_{ji}=0\;.
  \end{equation}
  The desired result would then follow from
  Proposition~\ref{prop:diff}.

  To complete the proof, we need to show that we can find such a
  normal module $\cN$. We recall that a module is called normal
  if it is complement to $\p(\hom(\fn,\fg)_+)$ in
  $\hom(\wedge^2 \fn,\fg)_+$. We are looking for
  a normal module $\cN\subset \cS^\perp$ such that $\cN\oplus \im \p_+
  = \hom(\wedge^2 \fn,\fg)_+$ or, equivalently, we need $\cN$ to satisfy
  \begin{equation*}
  \cS\subset\cN^\perp\quad\mbox{and}\quad
  \cN^\perp \oplus (\im \p_+)^\perp = \hom(\wedge^2 \fn,\fg)_+\;.
  \end{equation*}
  If $\cS\cap(\im \p_+)^\perp=\{0\}$, then we can take 
  \begin{equation*}
  \cN^\perp = \cS \oplus \left(\cS \oplus (\im \p_+)^\perp\right)^\perp.
  \end{equation*}
  Indeed, under this assumption, by construction, we have
  $\cN^\perp \cap (\im \p_+)^\perp = \{0\}$ and a dimension count
  shows that
  $$
  \dim \cN^\perp = \dim \hom(\wedge^2 \fn,\fg)_+ - \dim  (\im \p_+)^\perp.
  $$
  Due to $\cS$ and $(\im \p_+)^\perp$ being $\fh$-submodules, $\cN$ is
  a $\fh$-submodule as well.
  
  Let us now prove that $\cS\cap(\im \p_+)^\perp=\{0\}$. Since we can relabel
  the basis vectors $e_i$ for $i\in\{1,\dots,k_0\}$, without loss of
  generality,
  it suffices to prove that there exists some $v\in\im\p_+$ such that
  \begin{equation*}
  g \left( \sum_{j=1}^n e_j \otimes e^j \wedge e^1, v \right) \neq 0\;.
  \end{equation*} 
  As $\ker \fh = \operatorname{span}\{e_{{k_0} + 1},\dots,e_{k_1}\}$,
  there exists some $h_1 \in \fh$ where
  $h_1 \cdot e_1 = \sum_{j=1}^n \alpha^j e_j$ has at
  least one non-zero coefficient $\alpha^i$ for
  $i\in\{2,\dots,k_0\}$. Recall that due to the
  filtered algebra structure, we have $\p e^1 = 0$, and by using
  formula~\eqref{eq:metric}, we obtain 
  \begin{equation*}
    g \left( \sum_{j=1}^n e_j \otimes e^j \wedge e^1, \p(h_1\otimes
    e^i)\right) = g \left( \sum_{j=1}^n e_j \otimes e^j \wedge e^1,
    \sum_{l=1}^n \alpha_l e_l\otimes e^1 \wedge e^i\right) = -
  \alpha^i \neq 0\;,
  \end{equation*} 
  as required.
\end{proof}

A natural question that comes to mind is whether the Cartan connection
constructed by Morimoto allows us to obtain the
sub-Laplacian $\Delta_\cP$ defined with respect to the Popp
volume via the discussed stochastic development procedure. As we have seen
in the above proof, for the stochastic development of the canonical
sub-Riemannian diffusion on the model space to have generator
$\frac{1}{2}\Delta_\cP$, the curvature function needs to
satisfy
\begin{equation}\label{eq:cond4kappa}
  g\left(\sum_{j=1}^n e_j \otimes e^j \wedge e^i, \kappa \right) =0\;.
\end{equation}
This means that if we impose Morimoto's normalisation and 
the difference $\Delta - \Delta_\cP$ vanishes,
then $\sum_{j=1}^n e_j \otimes e^j \wedge e^i \in \im \p$.
Since $\p e^i = 0$ for all $i\in\{1,\dots k_1 \}$,
a necessary condition for the latter is
\begin{equation}\label{eq:cond4morimoto}
  \p\left(\sum_{j=1}^n e_j \otimes e^j\right)\wedge e^i=0
  \quad\mbox{for all }i\in\{1,\dots,k_1\}\;,
\end{equation}
which is easy to check in practice.
In Section~\ref{sec:ttfive}, we use condition~\eqref{eq:cond4morimoto}
to establish that if Morimoto's normalisation condition is chosen
for a free structure with two generators which is not a 3D
contact structure
then we have $\Delta - \Delta_\cP \neq 0$.

Examples where a stochastic development of the canonical
sub-Riemannian diffusion on the model space never gives rise to the
stochastic process with generator $\frac{1}{2}\Delta_\cP$ are easy
to find.
Let us consider a Goursat manifold, that is, a
sub-Riemannian manifold $(M,\cD,g)$ with growth vector
$(2,3,\dots,n-1,n)$ for some $n\in\N$ with $n\geq 4$.
The associated Levy form $\cL$ is a map
\begin{equation*}
  \cL \colon \cD^{-2} \times \cD^{-2} \to \cD^{-3}/\cD^{-2}
\end{equation*}
defined pointwise as follows. For vectors $v,w\in\cD_q^{-2}$ and vector fields
  $X_v,X_w \in \cD^{-2}$ extending $v$ and $w$, respectively, we set
\begin{equation*}
  \cL_q(v,w) = [X_v,X_w](q) \mod \cD_q^{-2}\;.
\end{equation*}

The forms $\cL_q$ are skew-symmetric bilinear forms
on odd-dimensional spaces and hence, they must have non-trivial
kernels $L_q$ which form a line field $L$. 
The non-integrability condition implies that $L\subset
\cD$, cf.~\cite{ivansasha}. The presence of the characteristic line
field $L$ breaks the
$SO(2)$ symmetry and leaves us with $H=\{\id\}$. This implies that
$\fh=\{0\}$ and therefore, the $\fh$-part $\omega_\fh$ of the Cartan
connection is trivial. Thus, no matter what
Cartan connection we choose, the generator of the
  developed stochastic process always ends up having vanishing first
  order term and we are left with the sum of squares term.

There is a simple way to generate many explicit examples
of this kind, because Goursat distributions often arise as Cartan
distributions in jet bundles. For instance, let us consider a
two-dimensional manifold $M$ with a global frame $(X_1,X_2)$ of
vector fields. We define a contact distribution $\cD_1$ on the direct
product $M\times S^1$ which is the span of the two vector
fields
\begin{equation*}
  Y_1 = \frac{\p}{\p \theta_1}\;, \qquad Y_2 = \cos \theta_1 X_1 +
  \sin \theta_1 X_2\;, 
\end{equation*}
where $\theta_1$ is a coordinate on $S^1$. We then apply the same
procedure a second time but this time to the pair $(Y_1,Y_2)$ of
vector fields to
obtain an Engel distribution $\cD_2$ on $M\times S^1\times S^1$
spanned by the vector fields
\begin{equation*}
  Z_1 = \frac{\p}{\p \theta_2}\;, \qquad Z_2 = \cos \theta_2 Y_1 +
  \sin \theta_2 Y_2\;,
\end{equation*}
where $\theta_2$ is a coordinate on the newly added circle $S^1$. We
can carry on with this prolongation procedure and at each iteration it
takes a Goursat manifold of step $n$ and gives us a Goursat manifold
of step $n+1$.

If we consider the upper half-plane $\R^2_+$ with coordinates
$(x,y)$ for $y>0$ and the vector fields
\begin{equation*}
  X_1 = y\frac{\p}{\p x}\;, \qquad X_2 = y\frac{\p}{\p y}\;,
\end{equation*}
then after applying the prolongation procedure twice, we find the two
vector fields $Z_1,Z_2$ that span $\cD_2$ of $\R^2_+ \times
\T^2$. Assuming that $Z_1$ and $Z_2$ are orthonormal, we obtain a
sub-Riemannian structure on $(\R^2_+ \times \T^2,\cD_2)$. Setting
\begin{equation*}
  Z_3 = [Z_1,Z_2] \quad \mbox{and} \quad Z_4 = [Z_2,Z_3]\;,
\end{equation*}
we find
\begin{equation*}
  [Z_2,Z_4] =
  -\left( \cos \theta_1\sin \theta_2 + \cos\theta_2\right)
  \left(\sin\theta_2 Z_2 + \cos \theta_2 Z_3\right) + \sin
  \theta_1\sin \theta_2 Z_4\;.
\end{equation*}
In particular, we see that
$c_{21}^1=c_{22}^2=c_{23}^3=0$ whereas
$c_{24}^4 \neq 0$ and hence, by formula~\eqref{eq:sr_laplacian}, the
sub-Laplacian $\Delta_\cP$ defined with respect to the Popp
volume is not a sum of squares operator.

Despite this example, we want to stress that there are plenty of
geometrically
interesting structures where all sub-representations of $H$ on
$\fn_{-1}$ have dimension strictly greater than one. In particular,
the existence of a Cartan connection which develops the canonical
sub-Riemannian diffusion on the model space to the stochastic process
with generator $\frac{1}{2}\Delta_\cP$ is guaranteed by
Theorem~\ref{thm:develop}, and we can follow the proof of
Theorem~\ref{thm:develop} to construct such a Cartan connection.
This is demonstrated in Section~\ref{sec:ttfive} for free
sub-Riemannian structures with two generators.

Before that, we illustrate the full Cartan machinery in
Section~\ref{sec:3d} and show that in fact any adapted Cartan
connection on a 3D contact structure satisfies
condition~\eqref{eq:cond4kappa}. As a result each stochastic development
of the canonical sub-Riemannian diffusion on the Heisenberg group has
generator $\frac{1}{2}\Delta_\cP$.

It is worth noting that the Cartan connection constructed by
Doubrov and Slov\'{a}k in~\cite{slovak_doubrov} for step two free
structures also satisfies condition~\eqref{eq:cond4kappa}.

\subsection{The three-dimensional contact case}
\label{sec:3d}
The discussions in this subsection establish the following result.
\begin{proposition}
For any Cartan connection on a 3D contact structure
which is constructed by using some normal module $\cN$,
the stochastic
development of the canonical sub-Riemannian diffusion process on the
3D Heisenberg group has generator $\frac{1}{2}\Delta_\cP$.
\end{proposition}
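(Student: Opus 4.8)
The plan is to show that the curvature function of any Cartan connection arising from a normal module has vanishing homogeneous component of degree one, from which condition~\eqref{eq:cond4kappa} --- and hence, by Proposition~\ref{prop:diff}, the equality $\Delta = \Delta_\cP$ --- follows at once. First I would fix the model algebra explicitly: here $\fn = \fn_{-1}\oplus\fn_{-2}$ is the three-dimensional Heisenberg algebra, with $\fn_{-1} = \spann\{e_1,e_2\}$, $\fn_{-2} = \spann\{e_3\}$ and $[e_1,e_2] = e_3$, while $\fh \cong \mathfrak{so}(2)$ is one-dimensional, acting on $\fn_{-1}$ by rotation and trivially on $\fn_{-2}$, so that $\fg = \fn\oplus\fh$ has dimension four. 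The crucial preliminary observation is that the element $\sum_{j=1}^n e_j\otimes e^j\wedge e^i$ appearing in~\eqref{eq:cond4kappa} is homogeneous: each summand $e_j\otimes e^j\wedge e^i$ with $e_j\in\fn_{-d}$ has degree $d + 1 - d = 1$, so the whole element lies in the degree-one part of $\hom(\wedge^2\fn,\fg)_+$. Since the extended metric $g$ respects the grading, $g\bigl(\sum_{j}e_j\otimes e^j\wedge e^i,\kappa\bigr)$ only sees the degree-one component of $\kappa$, and it therefore suffices to prove that this component vanishes.

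To this end I would examine the Lie algebra differential $\p$ in degree one. Because $\fg$ has no elements of positive degree and $\wedge^3\fn$ is spanned by $e_1\wedge e_2\wedge e_3$ of degree $4$, both $\fg$ and $\hom(\wedge^3\fn,\fg)$ have trivial degree-one part, so the degree-one strand of the complex reduces to $0 \to \hom(\fn,\fg)_1 \xrightarrow{\p} \hom(\wedge^2\fn,\fg)_1 \to 0$, and a direct count shows both middle spaces are four-dimensional. Consequently $\p$ is surjective in degree one precisely when it is injective there, that is, when $H^1(\fn,\fg)_1 = 0$. This vanishing is exactly the statement that the Tanaka prolongation of $(\fn,\fh)$ carries no positive part, which holds in our metric setting, cf.~\cite[Section~2]{constant_sub}; alternatively one verifies it by computing $\p$ on the four basis elements $e^1\otimes A,\, e^2\otimes A,\, e^3\otimes e_1,\, e^3\otimes e_2$ (with $A$ the generator of $\fh$) and checking that their images span the degree-one part of $\hom(\wedge^2\fn,\fg)$. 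Either way I obtain $\im\p \supseteq \hom(\wedge^2\fn,\fg)_1$, so the entire degree-one part of $\hom(\wedge^2\fn,\fg)_+$ is a coboundary.

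With this in hand the conclusion is short. A normal module $\cN$ is a graded $H$-complement to $\im\p(\hom(\fn,\fg)_+)$, so its degree-one part complements $\im\p$ in degree one; since the latter already exhausts $\hom(\wedge^2\fn,\fg)_1$, the module $\cN$ has no degree-one component. As the curvature function $\kappa$ of the associated Cartan connection takes values in $\cN$, its degree-one component vanishes identically, whence $g\bigl(\sum_j e_j\otimes e^j\wedge e^i,\kappa\bigr) = 0$ and condition~\eqref{eq:cond4kappa} holds. Proposition~\ref{prop:diff} then yields $\Delta - \Delta_\cP = R\circ\kappa_\fn = 0$, so the stochastic development of the canonical sub-Riemannian diffusion on the Heisenberg group has generator $\tfrac12\Delta_\cP$, as claimed. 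I expect the main obstacle to be the degree-one cohomology computation $H^1(\fn,\fg)_1 = 0$ (equivalently, that $\im\p$ fills the degree-one part of $\hom(\wedge^2\fn,\fg)_+$); the one point requiring care is the homogeneity of normal modules, which is what licenses passing from ``$\cN$ complements $\im\p$'' to ``$\cN$ has trivial degree-one part''.
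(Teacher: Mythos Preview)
Your proposal is correct and follows essentially the same route as the paper: both arguments show that the Lie algebra differential $\p$ is a bijection on the degree-one part of $\hom(\wedge^2\fn,\fg)_+$, conclude that any normal module $\cN$ has trivial degree-one component, and hence that the curvature function $\kappa$ cannot pair nontrivially with the degree-one element $\sum_j e_j\otimes e^j\wedge e^i$, giving condition~\eqref{eq:cond4kappa} and then $\Delta=\Delta_\cP$ via Proposition~\ref{prop:diff}. The paper carries out the bijectivity check by explicitly listing the four images $\p(e_4\otimes e^1),\p(e_4\otimes e^2),\p(e_1\otimes e^3),\p(e_2\otimes e^3)$, whereas you add the dimension count and the cohomological interpretation through the Tanaka prolongation; these are cosmetic differences. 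The gradedness caveat you flag is genuine and is treated no more carefully in the paper than in your sketch.
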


For 3D contact
manifolds, $\fn$ is the 3D Heisenberg Lie algebra, $\fh$ is isomorphic
to $\mathfrak{so}(2)$ and $\fg$ is a semi-direct product of the
two. The Lie algebra $\fn$ admits a basis $\{e_1,e_2,e_3\}$ which
satisfies
$$
[e_1,e_2]=e_3\;,
$$
with the other commutators in $\fn$ being zero.
The first step is to determine the commutation relations in the Lie
algebra $\fg$. Let $e_4$ be the only non-trivial element of $\fh$.
Its
action on $\fn$ is characterised by the Lie algebra derivation
condition in~\eqref{eq:symm}, that is, we need to have
\begin{equation*}
e_4([X,Y]) =[e_4(X),Y]+[X,e_4(Y)]
\end{equation*}
for all $X,Y \in \fn$.
To determine the infinitesimal action of $e_4$, we start with its
action on $\fn_{-1}$ which is simply given by the infinitesimal
rotation such that $e_4(e_1)=-e_2$ and $e_4(e_2)=e_1$. We then apply
the formula above to obtain the action of $e_4$ on $e_3$ which yields
$$
e_4(e_3) = e_4\left([e_1,e_2]\right) = [e_4(e_1),e_2]+[e_1,e_4(e_2)] = 0\;. 
$$
In particular, we can view the action of $e_4$ as the adjoint action
on the Lie algebra $\fn\subset \fg$. This gives rise to the following
structure equations on $\fg$
\begin{equation}
  \label{eq:heisenberg_relations}
  [e_1,e_2] = e_3\;, \qquad
  [e_4,e_1] = -e_2\;, \qquad
  [e_4,e_2] = e_1\;,
\end{equation}
with the remaining commutators being zero.

If we take $X_3$ to be the Reeb 
field and if $(X_1,X_2)$ is an
orthonormal frame of the contact distribution, then we have
the structure equations
\begin{align*}
  [X_1,X_2] &= X_3 + c^1_{12}X_1 + c^2_{12}X_2\;,\\
  [X_3,X_1] &= c^1_{31}X_1 + c^2_{31}X_2\;,\\
  [X_3,X_2] &= c^1_{32}X_1 + c^2_{32}X_2\;,
\end{align*}
or, by duality, for the coframe
  $(\theta^1,\theta^2,\theta^3)$ dual to $(X_1,X_2,X_3)$, we obtain
\begin{align*}
\db\theta^1 &= -c^1_{12}\theta^1\wedge \theta^2
              -c^1_{13}\theta^1\wedge \theta^3 -c^1_{23}\theta^2\wedge
             \theta^3\;,\\
\db\theta^2 &= -c^2_{12}\theta^1\wedge \theta^2 -c^2_{13}\theta^1\wedge
            \theta^3 -c^2_{23}\theta^2\wedge \theta^3\;,\\ 
\db\theta^3 &= - \theta^1\wedge \theta^2\;.
\end{align*}

For an arbitrarily chosen Cartan connection $\omega$ on $\cO_H(\cD)$,
we write
\begin{equation*}
  \omega = \sum_{i=1}^4 e_i \otimes \omega^i\;.
\end{equation*}
The first three one-forms $\omega^1,\omega^2,\omega^3$
are components of the the soldering form $\omega_\fn$,
while $\omega^4$ is $\omega_\fh$.
We can express these forms in terms of the coframe
$(\theta^1,\theta^2,\theta^3)$, some coefficients $f^1_3,f^2_3$ and
the Christoffel symbols defined in Section~\ref{sec:develop}
as follows
\begin{align*}
  \theta^i_\fn &= \theta^i + f^i_3 \theta^3\;,
                 \quad\mbox{for } i\in\{1,2\}\;, \\
  \theta^3_\fn &= \theta^3\;, \\
  \theta_\fh^4 &= \Gamma^4_1 \theta^1_\fn + \Gamma^4_2 \theta^2_\fn
                 +\Gamma^4_3 \theta^3_\fn\;.
\end{align*}
Note that the coefficients $f^1_3$ and $f^2_3$
are nothing but the non-trivial
off-diagonal components of the matrix-valued function
$F$ introduced
in Section~\ref{sec:develop}. They arise because the soldering form is
not canonically defined. The
curvature two-form $\Omega$ corresponding to $\omega$ is given by
\begin{equation*}
  \Omega = \sum_{i=1}^4 e_i \otimes \Omega^i\;,
\end{equation*}
where
\begin{align*}
  \Omega^1 &= \db\theta_\fn^1 + \theta_\fh^4 \wedge \theta_\fn^2\;,\\
  \Omega^2 &= \db\theta_\fn^2 - \theta_\fh^4 \wedge \theta_\fn^1\;,\\
  \Omega^3 &= \db\theta_\fn^3 + \theta_\fn^1 \wedge \theta_\fn^2\;,\\
  \Omega^4 &= \db\theta_\fh^4\;,
\end{align*}
follows from~\eqref{eq:curvature}.
We determine the components $\Omega^i_{jk}$ by putting the
formulae for the exterior differentials $\db\theta^1$,
$\db\theta^2$ and $\db\theta^3$ into the above expressions and by
reading off the coefficients in front of $\theta_\fn^j \wedge \theta_\fn^k$.
In particular, we find
\begin{align*}
\Omega_{12}^2 &= -c^1_{12}-f^1_3 + \Gamma^1_4\;, & \Omega_{32}^2 &= f^1_3\;, \\
\Omega_{21}^1 &= c^2_{12}+f^2_3 - \Gamma^2_4\;, & \Omega_{31}^1 &= -f^2_3\;.
\end{align*}
The condition~\eqref{eq:vanishing_curv} implies that
the Christoffel symbols of 
the Cartan connection constructed in the proof of
Theorem~\ref{thm:develop}
satisfy $\Gamma^1_4 = c^1_{12}$ and $\Gamma^4_2 = c^2_{12}$.
Therefore, the difference $\Delta - \Delta_\cP$ does indeed vanish.

An interesting property of 3D contact structures
is that we have $\Delta - \Delta_\cP=0$ for any
choice of normal module $\cN$.
We argue as follows.
As a consequence of the relations~\eqref{eq:heisenberg_relations},
the basic differentials are given by
\begin{align*}
\p e_1 &= -e_3 \otimes e^2\;,\\
\p e_2 &= e_3 \otimes e^1 \;,\\
\p e_3 &= 0\;,  \\
\p e_4 &= e_2 \otimes e^1 - e_1 \otimes e^2\;, \\
\p e^3 &= -e^1\wedge e^2\;, \\
 \p e^1 &= \p e^2= 0\;.
\end{align*}
Recall that $\deg e_1 =\deg e_2 = -1$, $\deg e_3 = -2$, $\deg e_4 = 0$
and the opposite signs for the upper index. Therefore, the differential
preserves the grading of the spaces $\hom(\wedge^k
  \fn,\fg)$.

To verify condition~\eqref{eq:cond4kappa},
we only need to compute the components of degree
one. We find that
\begin{align*}
\p \left(e_4 \otimes e^1\right) &= -e_1 \otimes e^2 \wedge e^1\;, \\
\p \left(e_4 \otimes e^2\right) &= e_2 \otimes e^1 \wedge e^2\;, \\
\p \left(e_1 \otimes e^3\right) &= -e_3 \otimes e^2 \wedge e^3\;, \\
\p \left(e_2 \otimes e^3\right) &= e_3 \otimes e^1 \wedge e^3\;, 
\end{align*}
which shows that on degree one forms the Lie algebra
differential $\p$ is a bijection. Hence, any normal module has no
degree one components as it is transversal to $\im\p$.
Since the curvature function $\kappa$ takes values in the normal
module, it follows that also $\kappa$ has no degree one components, 
and
condition~\eqref{eq:cond4kappa} is satisfied automatically.

In particular, the Cartan connection built using Morimoto's
normalisation agrees with the Cartan connection constructed in the
proof of Theorem~\ref{thm:develop}.

\subsection{Free sub-Riemannian structures with
  two generators}
\label{sec:ttfive}
The Cartan connection that we construct in the proof of
Theorem~\ref{thm:develop} looks particularly simple in the case of
free structures with two generators.
For these structures, the nilpotent Lie algebra $\fn$ is a free
nilpotent Lie algebra with generators $e_1,e_2$, and the Lie algebra
$\fh$ is generated by a single element $e_0$.

We recall that in the proof of Theorem~\ref{thm:develop} we construct
the normal module $\cN$ by considering
$\p (\hom(\fn,\fg)_+)$ and by replacing certain $k_1$ elements from
$\p \hom(\fn_{-1},\fh)$ with $\sum_{j=1}^n e_j \otimes e^j \wedge e^i$
for $i\in \{1,\dots,k_1\}$. If we have only two generators, that is
$k_1=2$, then
\begin{equation*}
  \cS=\spann
  \left\{\sum_{j=1}^n e_j \otimes e^j \wedge e^1,
    \sum_{j=1}^n e_j \otimes e^j\wedge e^2\right\}
\end{equation*}
and
\begin{equation*}
  \dim \left(\p \hom(\fn_{-1},\fh)\right)
  = \dim\cS = 2\;.
\end{equation*}
Following the construction in the proof of Theorem~\ref{thm:develop},
we take the normal module $\cN$ to be orthogonal to
\begin{equation*}
  \spann\left\{\sum_{j=1}^n e_j \otimes e^j \wedge e^1,
    \sum_{j=1}^n e_j \otimes e^j\wedge e^2,
    \p \hom\left(\fn_{-i-1},\fn_{-i}\right)
    \mbox{ for } 1\leq i \leq m-1\right\}.
\end{equation*}

As illustrated below this module need not be the only
possible choice for the normal module $\cN$.
For instance, let us consider structures with growth vector $(2,3,5)$, that
is, the space $\fn$ is spanned by $e_1,e_2,e_3,e_4,e_5$ and we have
the following structure equations on $\fg=\fn\oplus \fh$
\begin{align*}
  [e_1,e_2] & = e_3\;, & [e_1,e_3] &= e_4\;, & [e_2,e_3]&=e_5\;,\\
  [e_0,e_1] & = -e_2\;, & [e_0,e_4] & = -e_5\;, \\ 
  [e_0,e_2] & = e_1\;, & [e_0,e_5] & = e_4\;.
\end{align*}
The differentials of degree one components are given, for $e_i
\otimes e^j \wedge e^k$ abbreviated to $e_i^{jk}$, by
\begin{align*}
  \p \left(e_0 \otimes e^1\right) &= -e_1^{21}+e_5^{41}-e_4^{51}\;,\\
  \p \left(e_0 \otimes e^2\right) &= e_2^{12}+e_5^{42}-e_4^{52}\;,\\
  \p \left(e_1 \otimes e^3\right) &= -e_3^{23}-e_1^{12}\;,\\
  \p \left(e_2 \otimes e^3\right) &= e_3^{13}-e_2^{12}\;,\\
  \p \left(e_3 \otimes e^4\right) &= e_4^{14}+e_5^{24}-e_3^{13}\;,\\
  \p \left(e_3 \otimes e^5\right) &= e_4^{15}+e_5^{25}-e_3^{23}\;.
\end{align*}
Then we can take the normal module $\cN$ to
be orthogonal to
\begin{equation*}
\spann
  \left\{e_1^{12},e_2^{12},e_3^{13},e_3^{23},
    e_4^{i4}+e_5^{i5}\mbox{ for } i=1\mbox{ and }i=2\right\}
\end{equation*}
because this span contains $\cS$ as a submodule.

We close by noting that a simple calculation establishes
\begin{equation}
\label{eq:morimotofinal}
  \p\left( \sum_{j=1}^5 e_j \otimes e^j\right) \wedge e^1
  = - e_5 \otimes e^3 \wedge e^2 \wedge e^1 \neq 0\;.
\end{equation}
Thus, the condition~\eqref{eq:cond4morimoto} is not satisfied,
which implies that under Morimoto's normalisation the operator
$\Delta$ does not coincide with $\Delta_\cP$.
Moreover, we can use the observation~\eqref{eq:morimotofinal} to show
that the normalisation of Morimoto gives $\Delta - \Delta_\cP \neq 0$
for any free structure with two generators and step strictly greater
than two. Indeed, for higher step structures, for $i\in\{1,\dots,5\}$,
the expressions for the $\p e^i$ agree with the ones in the $(2,3,5)$
case, whereas the expressions for the $\p e_i$ only differ from the
ones in the $(2,3,5)$ case by higher order terms.
Hence, the differential
\begin{equation*}
  \p\left( \sum_{j=1}^n e_j \otimes e^j\right) \wedge e^1
\end{equation*}
agrees with~\eqref{eq:morimotofinal} modulo some terms involving
elements from higher steps. In particular,
it does not vanish.

\bibliographystyle{siam}
\bibliography{references}

\end{document}